\documentclass{article}
\usepackage{amsmath,amssymb,amsthm,bm,enumerate}
\usepackage[dvipdfmx]{graphicx}
\usepackage[titletoc,title]{appendix}
\usepackage{epstopdf}
\usepackage{pdfpages}

 \oddsidemargin 0in
 \evensidemargin 0in
 \topmargin -0.3in
 \textwidth 6.3in
 \textheight 8.54in

\newcommand{\keywords}[1]{\textbf{Key words:} #1}
\newcommand{\msc}[1]{\textbf{MSC2010:} #1}

\DeclareMathOperator{\Supp}{Supp}

\def\rnum#1{\expandafter{\romannumeral #1}} 
\def\Rnum#1{\uppercase\expandafter{\romannumeral #1}}

\title{ Lamplighter random walks on fractals}
\date{October 6th, 2016}
\author{%
 Takashi Kumagai\thanks{Research Institute for Mathematical Sciences,
Kyoto University, Kyoto 606-8502, Japan. \newline ~~~~~E-mail: {\tt kumagai@kurims.kyoto-u.ac.jp}}
 \and 
 Chikara Nakamura\thanks{Research Institute for Mathematical Sciences,
Kyoto University, Kyoto 606-8502, Japan. \newline ~~~~~E-mail: {\tt chikaran@kurims.kyoto-u.ac.jp}}
}

\begin{document}

\newtheorem{definition}{Definition}[section]
\newtheorem{proposition}[definition]{Proposition}
\newtheorem{theorem}[definition]{Theorem}
\newtheorem{Assumption}[definition]{Assumption}
\newtheorem{lemma}[definition]{Lemma}
\newtheorem{remark}[definition]{Remark}
\newtheorem{Example}[definition]{Example}
\newtheorem{Corollary}[definition]{Corollary}
\newtheorem*{notation}{Notation}

\makeatletter
 \renewcommand{\theequation}{%
   \thesection.\arabic{equation}}
  \@addtoreset{equation}{chapter}
\makeatother

\maketitle


\begin{abstract}
We consider  on-diagonal heat kernel estimates and the laws of the iterated logarithm  
 for a switch-walk-switch random walk on a lamplighter graph 
 under the condition that 
 the random walk on the underlying graph enjoys sub-Gaussian heat kernel estimates. 

\end{abstract}

\begin{flushleft}
\keywords{Random walks; wreath products; fractals; heat kernels; sub-Gaussian estimates; laws of the iterated logarithm (LILs)} \\
\msc{60J10; 60J35; 60J55}
\end{flushleft}

\section{Introduction} \label{Intro}
Let $G$ be a connected infinite graph and consider the situation that on each vertex of $G$ there is a lamp. 
Consider a lamplighter on the graph that makes the following random movements;
first, the lamplighter turns on or off the lamp on the site with equal probability, then he/she moves to the nearest neighbor of $G$ with equal probability, and  turns on or off the lamp on the new site with equal probability. The lamplighter repeats this random movement. Such a movement can be considered as a random 
walk on the wreath product of graphs $\mathbb{Z}_2 \wr G$ which is roughly a graph putting $\mathbb{Z}_2=\{0,1\}$ on each vertex of $G$ (see Definition \ref{Thm:wreath-P} for precise definition), and it is called a ``switch-walk-switch walk'' or ``lamplighter walk'' on $\mathbb{Z}_2 \wr G$. We are interested in the long time behavior of the walk. 
Some results are known when $G$ is a specific graph.   
Pittet and Saloff-Coste \cite{PS2} established on-diagonal heat kernel asymptotics  
of the random walk on $\mathbb{Z}_2 \wr \mathbb{Z}^d$.
More precisely, they obtained 
the following estimates; there exist positive constants $c_1 , c_2, c_3 , c_4 >0$ such that 
        \begin{gather}\label{eq:vares}
                 c_1  \exp \left[ - c_2 n^{ \frac{d}{d+2} } \right]   \le    h_{2n} (g,g)  \le c_3 \exp \left[ - c_4n^{ \frac{d}{d+2} } \right]  
        \end{gather} 
for all $g \in \mathbb{Z}_2 \wr \mathbb{Z}^d$, where $h_n(\cdot,\cdot)$ is the heat kernel
 (see  \cite{Varo1} \cite{Varo2} for earlier results  
 for the case of $G= \mathbb{Z}$ and \cite{CGP} for the case that $G$ is a finitely generated 
 group with polynomial volume growth). 
 Revelle \cite{Rev1} considered the lamplighter walk on the  wreath product $H \wr \mathbb{Z}$ 
 when $H$ is either a finite set or in a class of groups, and obtained some relations between the rate of escape of random walks on $H$ 
 and the law of the iterated logarithm (LIL in short) 
 on $H \wr \mathbb{Z}$. In particular, when $H=\mathbb{Z}_2$ he proved that  
there exist (non-random) constants $c_1 , c_2, c_3 , c_4 >0$ such that 
 the following hold for all $\bm{x} \in \mathbb{Z}_2 \wr \mathbb{Z}$:
       \begin{align}\label{eq:reves}
                c_1 \le  \limsup_{n \to \infty} \frac{ d(Y_0, Y_n) }{ n^{1/2}  (\log \log n)^{1/2} } 
                            \le c_2, ~~c_3 \le  \liminf_{n \to \infty}  \frac{  d (Y_0, Y_n)  }{ n^{1/2} (\log \log n)^{-1/2} } 
                            \le c_4, \quad 
                                 \text{ $P_{ \bm{x} }$-a.s.},   
       \end{align}  
 where $\{Y_n\}$ is the lamplighter random walk and $d(\cdot,\cdot)$ is   
 the graph distance on $\mathbb{Z}_2 \wr \mathbb{Z}$.\\

We are interested in the following question:

\medskip 
{\bf (Question)} How do the exponents in \eqref{eq:vares}, \eqref{eq:reves} change when 
the graph $G$ is more general? 

\begin{figure}[htb]
      \begin{minipage}{0.5\hsize}
                 \begin{center}
                            \includegraphics[width=50mm]{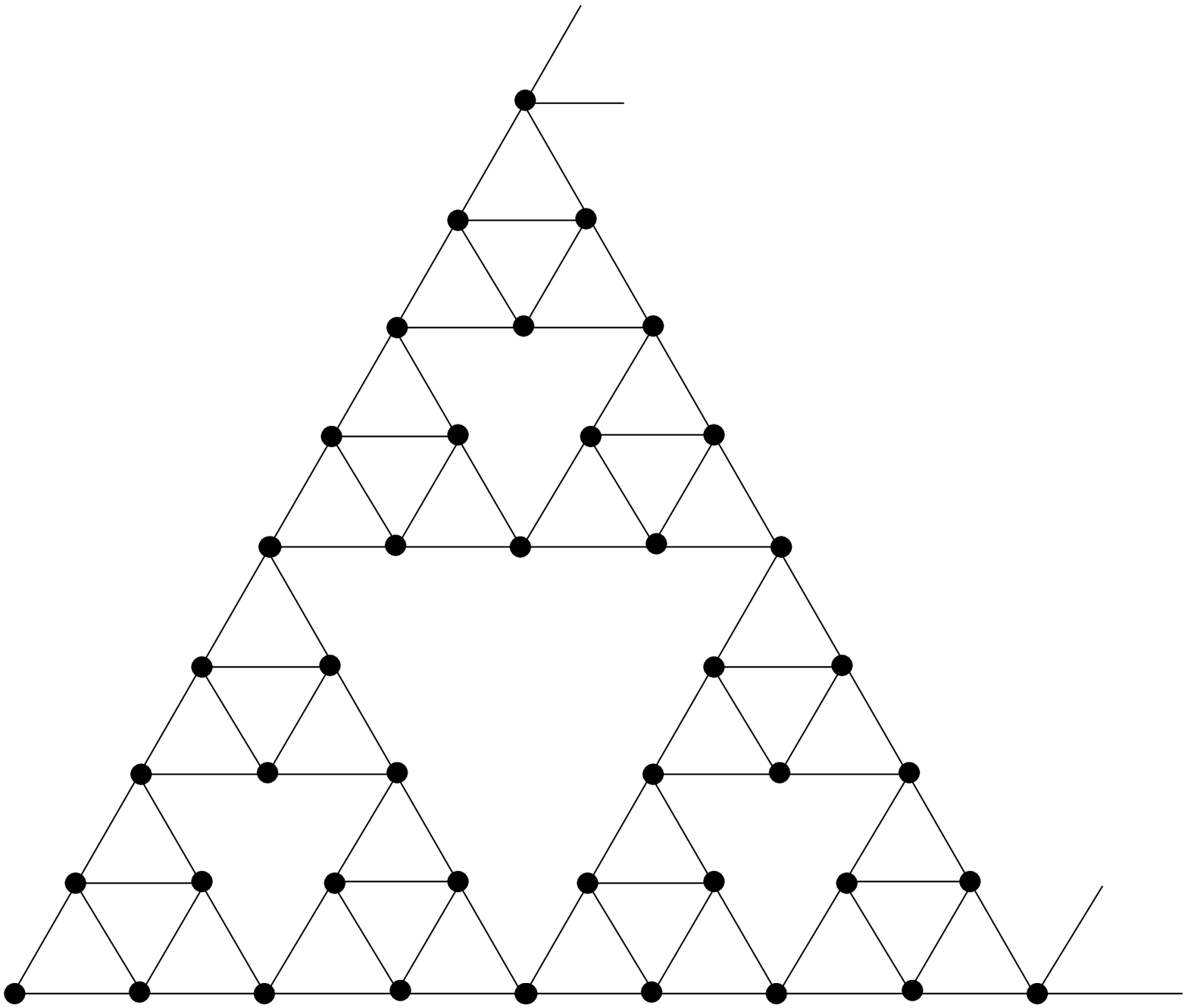}
                 \end{center}
                 \caption{The Sierpinski gasket graph. }
      \label{Fig:gasket}
     \end{minipage} 
     \begin{minipage}{0.5\hsize}
                \begin{center}
                            \includegraphics[width=50mm]{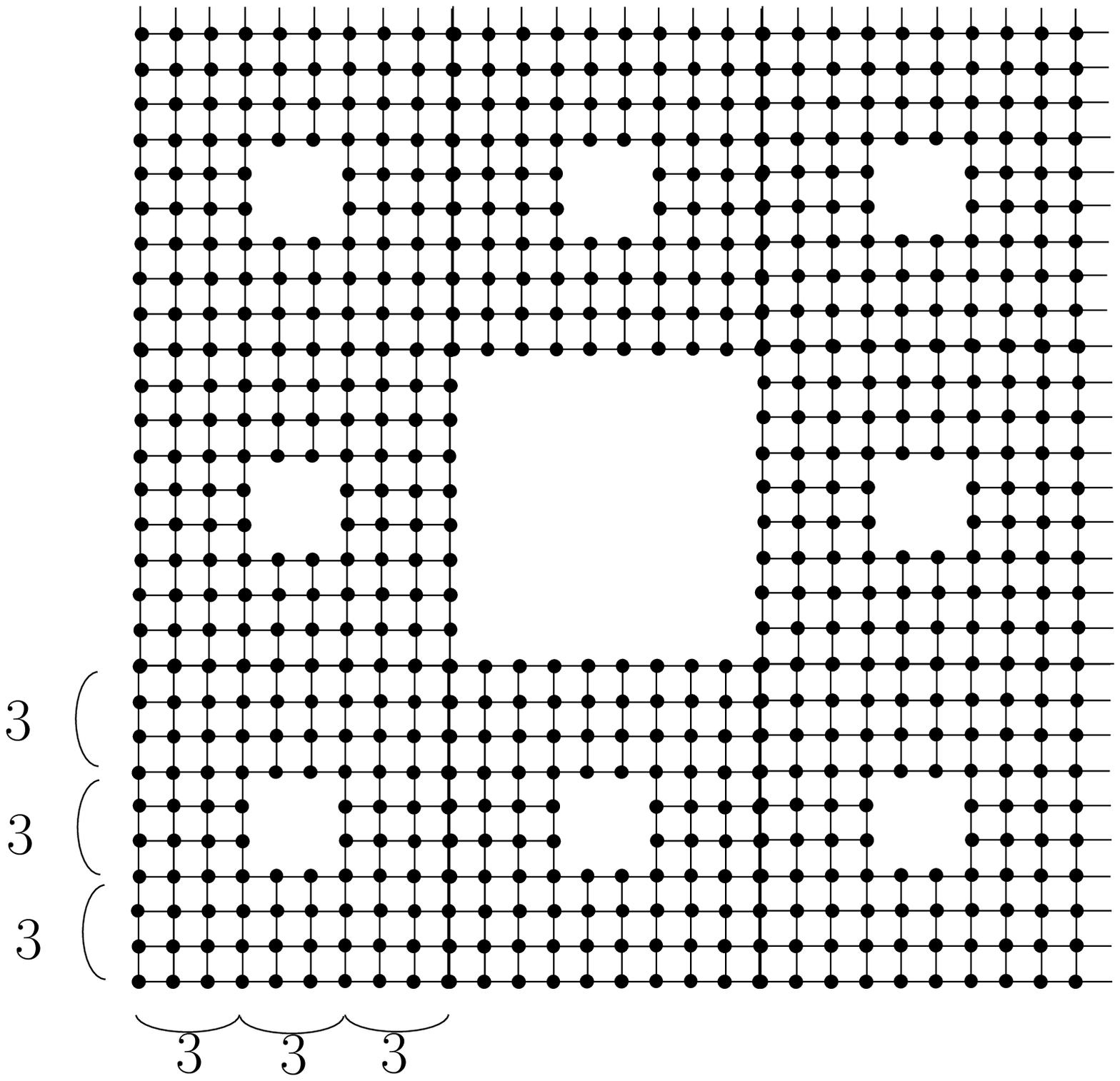}
                \end{center}
                \vspace{-5mm}
                \caption{The Sierpinski carpet graph. }
               \label{Fig:carpet}
     \end{minipage}
\end{figure}
 
\noindent 
In this paper, we will consider the question when $G$ is typically a fractal graph. Figure \ref{Fig:gasket} and Figure \ref{Fig:carpet}
illustrate concrete examples of fractal graphs. It is known that the random walk on such a fractal graph 
behaves anomalously in that it diffuses slower than 
a simple random walk on $\mathbb{Z}^d$.  It has been proved that the heat kernel $h_n (x,y)$ of the random walk $\{X_n \}_{n\ge 0}$ enjoys the following sub-Gaussian estimates; 
there exist positive constants $c_1 , c_2 , c_3 , c_4 >0$ such that
     \begin{align}   \label{eq:HKBM}
              & \frac{c_1}{n^{d_f/d_w}}   \exp \left( -c_2 \left( \frac{d(x,y)^{d_w} }{n} \right)^{1/(d_w-1)} \right)      \le h_{2n} (x,y)    \le   \frac{c_3}{n^{d_f/d_w}}   \exp \left( -c_4 \left( \frac{d(x,y)^{d_w} }{n} \right)^{1/(d_w-1)} \right) 
     \end{align} 
 for all $d(x,y)\le 2n$ (note that $h_{2n}(x,y)=0$ when $d(x,y)>2n$), where 
 $d(\cdot,\cdot)$ is the graph distance, $d_f$ is the volume growth exponent of the fractal graph and $d_w$ is called the walk dimension, 
 which expresses how fast the random walk on the fractal spreads out.  
 Indeed, by integrating  $(\ref{eq:HKBM})$, one can obtain the following estimates;  
 there exist positive constants $c_1 , c_2 >0$ such that 
     \begin{gather*}
                c_1 n^{1/d_w}  \le  E d(X_{2n}, X_0) \le c_2 n^{1/d_w}  
     \end{gather*} 
 for all $n >0$. 
 For more details on diffusions on fractals and random walks on fractal graphs, see \cite{Barlow1}, \cite{Kigami} and \cite{Kumagai2}. 
 As we see, properties of random walks on graphs are related to the geometric properties of the graphs.
 The volume growth is one of such properties. 
 For the graphs with  polynomial volume growth, there are well-established general methods to analyze the properties of random walks on them.
 But for the graphs with exponential volume growth, these methods are not applicable.   
 In this sense, the graphs with exponential volume growth  give us interesting research subject. 
 The wreath product $\mathbb{Z}_2 \wr G$ 
 belongs to this category,
  and this is another reason why we are interested in the lamplighter random walks on fractal graphs.\\

We consider the random walk on $\mathbb{Z}_2 \wr G$, where the 
 random walk on $G$ enjoys the sub-Gaussian heat kernel estimates $( \ref{eq:HKBM})$. 
   The main results of this paper are the following; 
                \begin{itemize}
                       \item[$(1)$] Sharp on-diagonal heat kernel estimates 
                                        for the random walk on $\mathbb{Z}_2 \wr G$ (Theorem \ref{Thm:HK}),
         
                       \item[$(2)$] LILs for the random walk on  $\mathbb{Z}_2 \wr G$  
                                       (Theorem \ref{Thm:LILrectra}).
                \end{itemize}

The on-diagonal heat kernel estimates are heavily related to the asymptotic properties of the spectrum of the corresponding discrete operator.   
We can obtain the exponent $d_f /(d_f + d_w)$ in our framework as the generalization of $d/(d+2)$.

For the LILs, we establish the LIL for $d_{\mathbb{Z}_2 \wr G} (Y_0, Y_n)$, 
 where $\{ Y_n \}_{n \ge 0}$ is the random walk on $\mathbb{Z}_2 \wr G$, 
 and the so-called another law of the iterated logarithm that gives the almost sure asymptotic behavior of the liminf of 
 $d_{\mathbb{Z}_2 \wr G} (Y_0, Y_n)$. 
 Note that in \eqref{eq:reves}, various properties that are specific to 
 $\mathbb{Z}$ were used, so the generalization to other graphs $G$  
 is highly non-trivial.  
We  have overcome the difficulty by finding some relationship between 
the range of the random walk on $G$ and $d_{\mathbb{Z}_2 \wr G} (Y_0, Y_n)$. 
To our knowledge, these are the first results on the LILs for the  wreath product beyond the case of $G=\mathbb{Z}$. \\

The outline of this paper is as follows. 
In section \ref{Sec:Notations}, we explain the framework and the main results of this paper.
In section \ref{Sec:ConseqHK}, we give some consequences of sub-Gaussian heat kernel estimates.
 These are preliminary results for section \ref{Sec:HK} and section \ref{Sec:LIL},
where we mainly treat the lamplighter random walks on fractal graphs.
In section \ref{Sec:HK}, we prove the on-diagonal heat kernel estimates. 
Section \ref{Sec:LIL} has three subsections. 
In subsection \ref{subsec:LIL41}, we give a relation between the range of random walk on $G$
  and $d_{\mathbb{Z}_2 \wr G} (Y_0, Y_n)$. 
Here, one of the keys is to prove the existence of a path that covers a subgraph of $G$ with the 
length of the path being (uniformly) comparable to the volume of the subgraph 
(Lemma \ref{Lem:LIL70}). 
In subsection \ref{subsec:LILrec},  we deduce the LILs for the random walk on $\mathbb{Z}_2 \wr G$ 
from the LILs for the range of the random walk on $G$ (Theorem \ref{Thm:LILdisc}) when $G$ is a strongly recurrent graph. 
In subsection \ref{subsec:LIL44}, we prove the LILs for the random walk on $\mathbb{Z}_2 \wr G$ when 
$G$ is a transient graph. 
In the Appendix \ref{sec:Appendix}, we give an outline of the proof of Theorem \ref{Thm:LILdisc}, on which the proof in subsection \ref{subsec:LILrec} is based. \\

Throughout this paper,  we use the following notation.
\begin{notation}

\begin{enumerate}
    \item[$(1)$]
          For two non-negative sequences $\{ a(n) \}_{n \ge 0 }$ and $\{ b(n) \}_{n \ge 0 }$,
         we write
             \begin{itemize}
                   \item    $a(n) \asymp  b(n)$ if there exist positive constants $c_1 , c_2>0$ such that 
                              $c_1 a(n) \le b (n)  \le c_2 a(n) $ for all $n$.  
                              
                    \item    $a(n) \approx b(n)$ if there exist positive constants $c_1 , c_2 , c_3 , c_4>0$ such that 
                               $c_1 a(c_2 n) \le b(n)  \le c_3 a(c_4 n)$ for all $n$.               \end{itemize}

    \item[$(2)$]
          We use $c, C, c_1 , c_2 , \cdots$ to denote deterministic positive finite constants whose values are insignificant.  
         These constants do not depend on time parameters $n,k, \cdots$,
          distance parameters $r,\cdots$, or 
         vertices of graphs. 
\end{enumerate}
 \end{notation}

\section{Framework and main results}   \label{Sec:Notations}
In this section, we introduce the framework and the main  results of this paper.

\subsection{Framework}     \label{subsec:Framework}  
Let $G=(V(G) , E(G) ) $ be an infinite, locally finite, connected graph.
 We assume $V(G)$ is a countable set.
 We say that $G$ is a graph of bounded degree if 
     \begin{gather}
                    M =   \sup_{ v \in V(G) } \deg v   < \infty .    \label{eq:bounded} 
     \end{gather} 
 We denote $d(x,y)$ the graph distance of $x,y$ in $G$, i.e. 
 the shortest length of paths between $x$ and $y$. 
 When we want to emphasize the graph $G$, we write $d_G (x,y)$ instead of $d(x,y)$.

Next, we introduce the wreath product of graphs. 

\begin{definition}[Wreath product] \label{Thm:wreath-P}
Let $G = (V(G),E(G))$ and $H=(V(H), E(H))$ be graphs.
We define the {\it wreath product of $G$ and $H$ (denoted by  $H \wr G$)} in the following way.
 We define the vertex set of the wreath product as 
    \begin{gather*}
              V ( H \wr G ) = \left\{ (f , v) \in V(H)^{V(G)} \times  V(G) \Biggm|
                                             \sharp  \Supp f < \infty \right\},
    \end{gather*}
 where $\Supp f= \{ x \in V(G) \mid f(x) \neq 0 \}$ for a fixed element $0 \in V(H)$.  For $(f ,u ), (g,v) \in V( H \wr G )$, $ \left( (f ,u ), (g,v) \right) \in E ( H \wr G) $ 
  if either $(a)$ or $(b)$ is satisfied:  
    \begin{enumerate}
          \item[$(a)$]   $f=g \quad \text{and} \quad (u,v) \in E(G)  $,      
 
          \item[$(b)$]  $  u=v,   \quad f(x) = g(x)  \quad (\forall x \in V(G)\setminus \{ u \})
                                             \quad \text{and} \quad (f(u) , g(u)) \in E(H) $.
    \end{enumerate}
We call $G$ the underlying graph of $H \wr G$ and $H$ the fiber graph of $H \wr G$. 
\end{definition}     
Throughout the paper, we will only consider the case $H=\mathbb{Z}_2$ that consists of two vertices $\{0,1\}$, say, and one edge that connects the two vertices. 
(As noted in Remark \ref{Thm:rem25}(2), 
the results in this paper hold when $H$ is a finite graph.)  
We denote the elements of $V( \mathbb{Z}_2 \wr G)$ by bold alphabets $\bm{x} , \bm{y} \cdots $
 and the elements of $V(G)$ by standard alphabets $x,y, \cdots$.  \\

Next, we introduce the notion of weighted graphs.
Let $\mu: V(G) \times V(G) \to [0,\infty )$ be a symmetric function such that 
$\mu_{xy} = \mu (x,y)>0$ if and only if $(x,y) \in E(G)$.  
We call the pair $(G, \mu )$ a weighted graph.
 For a weighted graph $(G, \mu )$, we define a measure $m = m_G$ on $V(G)$ by $m(A) = \sum_{x \in A} m(x)$ for $A\subset V(G)$ where $m(x) = \sum_{y:y \sim x} \mu_{xy}$.
 We will write $ V(x,r) = V_G (x,r ) = m (B(x,r))$, where $B(x,r) = \{ y \in V(G) \mid d (x,y) \le r \}$.

Let $\{ X_n \}_{n \ge 0 }$ be the (time-homogeneous) random walk on $G$
 whose transition probability is $P = (p(x,y) )_{x,y \in V(G)}$, where 
 $p(x,y)=\mu_{xy}/m(x)$. 
We call $\{ X_n \}_{n \ge 0 }$ the random walk associated with the weighted graph $(G , \mu )$.
$\{ X_n \}_{n \ge 0 }$ is reversible w.r.t. $m$, i.e.
$m (x) p(x,y) = m (y) p(y,x)$ for all $x,y \in V(G)$. Define
\[
p_n(x,y):=P_x(X_n=y),\qquad  ~~\forall x,y \in V(G).
\]
$p_n(x,y)/m(y)$ is called the heat kernel of the random walk. 

\medskip

We next give a set of 
conditions for the graph and the random walks.
\begin{Assumption}   \label{Ass:rw}
 Let $(G,\mu )$ be a weighted graph. We consider the following assumptions for $(G, \mu )$.
   \begin{enumerate}
        \item[$(1)$] ($p_0$-condition) : $(G,\mu)$ satisfies $p_0$-condition, i.e. there exists $p_0 >0$ such that $\mu_{xy}/m(x)  \ge p_0$ 
              for all $x,y \in V(G)$ with $\{ x,y \} \in E(G)$.    

        \item[$(2)$] ($d_f$-set condition) : There exist positive constants $d_f, c_1 , c_2 >0$ such that 
                         \begin{gather}
                                                 c_1 r^{d_f}  \le V (x,r) \le c_2 r^{d_f}       \label{eq:dfset} 
                         \end{gather}    
            for all $x \in V(G) , r \in \mathbb{N} \cup \{ 0 \} $.          
               Here, we regard $0^{d_f}$ as $1$.  

     \item[$(3)$]  (On-diagonal heat kernel upper bound) : 
      There exists $d_s > 0$ such that 
              the heat kernel  $\displaystyle \frac{p_n (x,y)}{ m(y) } $ of $\{ X_n \}_{ n \ge 0  }$ satisfies  the following estimate for all $x,y \in V(G), n \ge 1$ :
                       \begin{align}
                                    \frac{p_n (x,y)}{m(y)}  
                                          &\le   c_1n^{-d_s/2}.  \label{eq:21dow} 
                       \end{align} 

       \item[$(4)$]  (Sub-Gaussian heat kernel estimates) :  
             There exists $d_w > 1$ such that 
              the heat kernel  $\displaystyle \frac{p_n (x,y)}{ m(y) } $ of $\{ X_n \}_{ n \ge 0  }$ satisfies  the following estimates:
                       \begin{align}  \label{eq:210} 
                                    \frac{p_n (x,y)}{m(y)}  
                                          &\le   \frac{c_1}{V(x,n^{\frac{1}{d_w}})} 
                                            \exp \left( -c_2 \left( \frac{d(x,y)^{d_w}}{n} \right)^{\frac{1}{d_w -1}} \right)  
                       \end{align} 
                            for all $x,y \in V(G), n \ge 1 $, and 
                       \begin{align}  \label{eq:211}
                                   \frac{p_n (x,y)}{ m(y) }  + \frac{ p_{n+1} (x,y) }{ m (y) }
                                               &\ge    \frac{c_3 }{V(x,n^{\frac{1}{d_w}})} 
                                               \exp \left( -c_4 \left( \frac{d(x,y)^{d_w}}{n} \right)^{\frac{1}{d_w -1}} \right)   
                       \end{align}   
                           for $x,y \in V(G), n \ge 1$ with $d(x,y) \le n$. 
   \end{enumerate}
\end{Assumption} 

We will clarify which of the conditions above are assumed in each statement. 
As one can easily see, Assumption \ref{Ass:rw} $(2)$ and \eqref{eq:210} imply Assumption \ref{Ass:rw} $(3)$ with
                          \begin{gather*}
                                       d_s/2 = d_f/d_w.
                         \end{gather*}
$d_s$ is called the spectral dimension.

The fractal graphs such as the Sierpinski gasket graph and the Sierpinski carpet graph given  in section \ref{Intro} satisfy Assumption \ref{Ass:rw} (see \cite{BB,Jones}).  
Note that under Assumption \ref{Ass:rw} $(1)$, $G$ satisfies \eqref{eq:bounded} with $M = 1/p_0$. 
  Also note that, under Assumption \ref{Ass:rw} $(2)$, we have $c_1 \le m_G (x)   \le c_2$ for all $x \in V(G)$ and hence   
    \begin{gather}
                  c_1 \sharp A  \le    m (A)  \le    c_2 \sharp A,      \qquad~~\forall A \subset V(G),  \label{eq:counting1} 
    \end{gather}  
where $\sharp A$ is the cardinal number of $A$. 
Finally, under Assumption \ref{Ass:rw} (1) (2) we have  
   \begin{gather} 
           0 < \inf_{x,y\in V(G) , x \sim y} \mu_{xy} \le \sup_{x,y \in V(G) , x \sim y}  \mu_{xy}  < \infty.     \label{eq:conduct1}
    \end{gather}

Next, we define the lamplighter walk on $\mathbb{Z}_2 \wr G$.
We denote the transition probability on $\mathbb{Z}_2$ by $P^{ ( \mathbb{Z}_2) } = ( p^{ ( \mathbb{Z}_2) } (a,b))_{a,b\in \mathbb{Z}_2} $,
 where $ P^{ ( \mathbb{Z}_2) } $ is given by
     \begin{gather*}
                  p^{ ( \mathbb{Z}_2) } (a,b) = \frac{1}{2}, \qquad \text{ for all $a,b \in \mathbb{Z}_2$. }    
      \end{gather*}
We can lift $P=(p(x,y))_{x,y\in G}$ and $P^{ ( \mathbb{Z}_2) } = ( p^{ ( \mathbb{Z}_2) } (a,b))_{a,b\in \mathbb{Z}_2} $ on $\mathbb{Z}_2 \wr G$, by
      \begin{align*}
               \tilde{p}^{ (G) }  ( (f,x) , (g,y) ) &=   
                       \begin{cases}
                                     p (x,y)    &   \text{if $ f=g$}    \\
                                       0            & \text{otherwise}
                       \end{cases},     \\
                \tilde{p}^{ (\mathbb{Z}_2) }  ( (f,x) , (g,y) ) &=   
                       \begin{cases}
                                     \frac{1}{2}   &    \text{if $x=y$ and $f(v) = g(v)$ for all $v \neq x$ }    \\
                                       0            & \text{otherwise}
                       \end{cases} .     \\           
      \end{align*}

Let $Y_n = \{ (\eta_n , X_n ) \}_{n \ge 0 }$ be a random walk on $\mathbb{Z}_2 \wr G$ 
 whose transition probability $\tilde{p}$ is given by 
               \begin{align*} 
             & ~~~~~~~  \tilde{p} ( (f,x) , (g,y) )  
                                        =  \tilde{p}^{ (\mathbb{Z}_2) }  \ast  \tilde{p}^{ (G) }  \ast  \tilde{p}^{ (\mathbb{Z}_2) }  ( (f,x) , (g,y) )  \\
                      =  & \sum_{ (h_1 , w_1) , (h_2 , w_2)}  
                            \tilde{p}^{ (\mathbb{Z}_2) } (  (f,x) , (h_1, w_1) )  \tilde{p}^{ (G) }  (  (h_1, w_1) ,  (h_2, w_2)   )   \tilde{p}^{ (\mathbb{Z}_2) }  ( (h_2,w_2) , (g,y) ) .
              \end{align*}

Note that 
 if $(f,x) , (g,y) \in V(\mathbb{Z}_2 \wr G)$ satisfy $x \sim y$ and $f(z) = g(z)$ for all $z \neq x,y$ then  
       \begin{gather*}
                  P(Y_{n+1} = (g,y) \mid Y_n = (f,x)  )  = \frac{1}{4} p(x,y),
      \end{gather*}
 and otherwise it is zero. 
 
 This random walk moves in the following way. 
 Let $X_n$ be the site of lamplighter at time $n$ and $\eta_n$ be the on-lamp state at time $n$.
 The lamplighter changes the lamp at $X_n$ with probability $1/2$, moves on $G$ according to the transition probability $P=(p(x,y))_{x,y \in G}$, and then 
 changes the lamp at $X_{n+1}$ with probability $1/2$. The lamplighter repeats this procedure. 
 (In the first paragraph of section \ref{Intro}, we discussed the case when $\{X_n\}$ is a simple random walk on $G$.) 

Note that $\{ Y_n \}_{n \ge 0 }$ is reversible w.r.t. $m_{\mathbb{Z}_2 \wr G}$,
 where 
  \begin{gather*}
              m_{\mathbb{Z}_2 \wr G} ( (\eta , x) ) = m(x).  
  \end{gather*}    
We denote the transition probability of $\{ Y_n \}_{n \ge 0 }$ as $p( \bm{x} , \bm{y} )$ 
(cf. $p(x,y)$ is the transition probability of $\{ X_n \}_{n \ge 0}$). 
We sometimes write $m (\bm{x} )$ instead of $ m_{\mathbb{Z}_2 \wr G} (\bm{x} )$.

\subsection{Main results}

In this subsection, we state the main results of this paper.

\begin{theorem}   \label{Thm:HK}
Suppose that Assumption $\ref{Ass:rw}$ (1), (2) and (4) hold. 
Then the following holds;
     \begin{gather*}
                      \frac{ p_{2n}   (\bm{x} , \bm{x} )} {m_{\mathbb{Z}_2 \wr G} (\bm{x} )}  \approx   \exp [ - n^{\frac{d_f}{d_f + d_w}} ],\qquad~~
                            \forall \bm{x} \in V( \mathbb{Z}_2 \wr G). 
      \end{gather*}  
  \end{theorem}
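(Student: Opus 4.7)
The plan is to reduce the return probability on $\mathbb{Z}_2\wr G$ to a quantity involving only the underlying walk $\{X_n\}$ on $G$, and then to analyze that quantity using spectral consequences of the sub-Gaussian estimates. Unpacking the switch-walk-switch transition: conditionally on the trajectory $(X_0,\dots,X_{2n})$, the lamp configuration evolves via independent fair coin flips at each visited site, and the number of flips at any $v$ in the range $R_{2n}=\{X_0,\dots,X_{2n}\}$ is at least one. Hence the lamps return to their initial state with conditional probability exactly $2^{-|R_{2n}|}$, yielding
\[
 p_{2n}(\bm{x},\bm{x}) \;=\; E_x\bigl[\,2^{-|R_{2n}|}\,\mathbf{1}_{\{X_{2n}=x\}}\,\bigr].
\]
Dropping the indicator gives $p_{2n}(\bm{x},\bm{x})\le E_x[2^{-|R_{2n}|}]$ for the upper direction, while for the lower direction one restricts the expectation to a favorable event.

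For the lower bound I would force the walker to stay in the ball $B=B(x,R)$ up to time $2n$ and return to $x$. The sub-Gaussian estimates \eqref{eq:210}--\eqref{eq:211} combined with standard exit-time control yield a Dirichlet heat-kernel lower bound of the form $p^{B}_{2n}(x,x)\gtrsim V(B)^{-1}\exp(-c_1 n/R^{d_w})$, while $|R_{2n}|\le |B|\lesssim R^{d_f}$ follows from \eqref{eq:dfset}. This produces
\[
 p_{2n}(\bm{x},\bm{x}) \;\gtrsim\; 2^{-c_2 R^{d_f}}\, R^{-d_f}\, e^{-c_1 n/R^{d_w}},
\]
and balancing $R^{d_f}\asymp n/R^{d_w}$, i.e.\ $R\asymp n^{1/(d_f+d_w)}$, delivers the desired $\exp(-c n^{d_f/(d_f+d_w)})$.

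For the upper bound I would dominate $E_x[2^{-|R_{2n}|}]$ by $\sum_k 2^{-k}P_x(|R_{2n}|\le k)$ and apply a Donsker--Varadhan style large-deviation estimate
\[
 P_x(|R_{2n}|\le k) \;\le\; C\exp\!\bigl(-c\, n/k^{d_w/d_f}\bigr),
\]
which reduces, via $P_x(\tau_A>n)\le e^{-c n\lambda_1(A)}$ for any finite set $A\ni x$, to a Faber--Krahn inequality $\lambda_1(A)\gtrsim |A|^{-d_w/d_f}$ valid on $G$. Splitting the sum at the threshold $k\asymp n^{d_f/(d_f+d_w)}$ then yields the matching upper bound.

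The main technical obstacle is establishing the two spectral ingredients (the Dirichlet heat-kernel lower bound inside balls, and the uniform Faber--Krahn inequality for arbitrary finite sets) as consequences of \eqref{eq:210}--\eqref{eq:211} and \eqref{eq:dfset}, with constants uniform in the base vertex and in the set size; once this spectral package is in place (presumably developed in Section~\ref{Sec:ConseqHK}), the two optimizations above combine routinely to give the stated $\approx$-asymptotics.
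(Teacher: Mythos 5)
Your lower bound is, modulo packaging, the paper's own argument: the one real input is the confinement estimate $P_x\left(\max_{0\le j\le n}d(x,X_j)\le r\right)\ge c_1\exp(-c_2 n/r^{d_w})$ of Proposition \ref{Prop:lower80}, and whether one upgrades it to a Dirichlet heat-kernel lower bound on $G$ (your route; it follows from the confinement bound by Cauchy--Schwarz) or applies Cauchy--Schwarz directly on $\mathbb{Z}_2\wr G$ with the set $A$ of configurations supported in $B(x,r)$ (the paper's route) is immaterial: both give $\exp[-c(r^{d_f}+n/r^{d_w})]$ and the same optimization $r=n^{1/(d_f+d_w)}$. Your starting identity for the upper bound, $p_{2n}(\bm{x},\bm{x})\le E_x[2^{-R_{2n}}]$, is also exactly the paper's.

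The gap is in the upper bound, in the step where you claim $P_x(R_{2n}\le k)\le C\exp(-c\,n/k^{d_w/d_f})$ ``reduces, via $P_x(\tau_A>n)\le e^{-cn\lambda_1(A)}$, to a Faber--Krahn inequality.'' The event $\{R_{2n}\le k\}$ is not a confinement event for any \emph{fixed} set $A$: the range is random, and passing from the per-set exit-time bound to a bound on the range forces a union over all connected sets $A\ni x$ with $\sharp A\le k$, of which there are of order $C_M^{\,k}$ in a graph of degree bound $M$. This entropy factor is the whole difficulty of the Donsker--Varadhan-type estimate, and it is not absorbed by the reward $2^{-k}$ (generically $C_M>2$). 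The approach can be rescued: the Faber--Krahn cost $n/k^{d_w/d_f}$ dominates the entropy $k\log C_M$ precisely when $k\le\epsilon n^{d_f/(d_f+d_w)}$ for $\epsilon$ small, so splitting $E_x[2^{-R_{2n}}]\le P_x(R_{2n}\le K)+2^{-K}$ at $K=\epsilon n^{d_f/(d_f+d_w)}$ and union-bounding only below $K$ still yields $\exp(-c'n^{d_f/(d_f+d_w)})$, which suffices for ``$\approx$''. But as written your intermediate inequality is unsupported by the ingredients you cite, and you would additionally have to establish the Faber--Krahn inequality and the spectral exit-time bound from Assumption \ref{Ass:rw} (neither appears in Section \ref{Sec:ConseqHK}, which contains only the confinement estimates; the discrete-time spectral bound also needs care with the negative part of the spectrum of $P_A$). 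The paper sidesteps all of this by quoting Gibson's theorem \cite{Gibson}, which is exactly the required Donsker--Varadhan range estimate $-\log E_x[\exp(-\nu m(\bar R_{[n^{d_w}V(x,n)]}))]\asymp V(x,n)$.
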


Next we state the results 
on the LILs when $d_s/2 < 1$ and $d_s/2 > 1$ respectively.
\begin{theorem}   \label{Thm:LILrectra}    
  Let $(G, \mu)$ be a weighted graph. 
  
 \noindent  (I) Assume that Assumption  \ref{Ass:rw} (1), (2), (4) and $d_s / 2 <1$ hold.
 Then there exist (non-random) constants $c_1 , c_2, c_3 , c_4 >0$ such that the following hold
  for all $\bm{x} \in V(\mathbb{Z}_2 \wr G)$: 
      \begin{align}
                c_1 &\le  \limsup_{n \to \infty} \frac{ d_{\mathbb{Z}_2 \wr G } (Y_0, Y_n) }{ n^{d_s/2}  (\log \log n)^{1-d_s/2} } \le c_2, \quad 
                                 \text{ $P_{ \bm{x} }$-a.s.}  \label{eq:LILrec10}  \\   
                c_3 &\le  \liminf_{n \to \infty}  \frac{  d_{\mathbb{Z}_2 \wr G } (Y_0, Y_n)  }{ n^{d_s/2} (\log \log n)^{-d_s/2} }  \le c_4, \quad 
                                 \text{ $P_{ \bm{x} }$-a.s.}   \label{eq:LILrec20}  
      \end{align}
\noindent (II)  Assume that Assumption  \ref{Ass:rw} (1), (2), (3) and $d_s / 2 >1$ hold. 
Then there exist (non-random) positive constants 
$c_1,c_2 >0$ such that the following hold for all $\bm{x} \in V(\mathbb{Z}_2 \wr G)$:
     \begin{align}
                 c_1 & \le  \liminf_{n \to \infty} \frac{d_{\mathbb{Z}_2 \wr G} (Y_0 , Y_n ) }{n}  
                        \le   \limsup_{n \to \infty}  \frac{d_{\mathbb{Z}_2 \wr G} (Y_0 , Y_n ) }{n}   \le c_2, \qquad 
                             \text{ $P_{ \bm{x} }$-a.s. }    \label{eq:LILtransi30}  
     \end{align}
\end{theorem}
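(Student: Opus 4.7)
The plan is to reduce both parts of Theorem~\ref{Thm:LILrectra} to the asymptotic behavior of the range $R_n := \#\{X_0, X_1, \ldots, X_n\}$ of the underlying walk, by exploiting the combinatorial formula
\[
 d_{\mathbb{Z}_2\wr G}(Y_0, Y_n) \;=\; \#\Supp(\eta_n - \eta_0) + L_n,
\]
where $Y_n = (\eta_n, X_n)$ and $L_n$ is the length of a shortest path in $G$ from $X_0$ to $X_n$ visiting every vertex of $\Supp(\eta_n - \eta_0)$. Conditionally on the trajectory $(X_0, \ldots, X_n)$, each visited vertex $x$ sees a positive number of independent $\mathrm{Bernoulli}(1/2)$ lamp flips, so $\eta_n(x) - \eta_0(x)$ is itself $\mathrm{Bernoulli}(1/2)$, independently across $x$. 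Consequently $\#\Supp(\eta_n - \eta_0)$ is conditionally $\mathrm{Binomial}(R_n, 1/2)$, which concentrates around $R_n/2$.

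For Part~(I), I would first prove $d_{\mathbb{Z}_2\wr G}(Y_0, Y_n) \asymp R_n$ almost surely. The lower bound $d_{\mathbb{Z}_2\wr G}(Y_0, Y_n) \ge \#\Supp(\eta_n - \eta_0) \ge R_n/4$ (for large $n$) follows from the Binomial concentration above and Borel--Cantelli, since $R_n \to \infty$ a.s.\ by strong recurrence. For the matching upper bound one needs a tour of the random visited set $\{X_0, \ldots, X_n\}$ of length at most $C R_n$ that starts at $X_0$ and ends at $X_n$; this is precisely the content of Lemma~\ref{Lem:LIL70}, which provides such a covering path with length uniformly comparable to the volume of the subgraph. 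Once both sides of the comparison are in place, the two LILs \eqref{eq:LILrec10}--\eqref{eq:LILrec20} are inherited directly from Theorem~\ref{Thm:LILdisc}, which furnishes the $\limsup$ and $\liminf$ LILs for $R_n$ on the precise scales $n^{d_s/2}(\log\log n)^{1-d_s/2}$ and $n^{d_s/2}(\log\log n)^{-d_s/2}$.

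For Part~(II), the upper bound is deterministic: each single step of $Y$ corresponds to at most three edges of $\mathbb{Z}_2\wr G$ (switch--walk--switch), whence $d_{\mathbb{Z}_2\wr G}(Y_0, Y_n) \le 3n$. For the lower bound I would again invoke $d_{\mathbb{Z}_2\wr G}(Y_0, Y_n) \ge \#\Supp(\eta_n - \eta_0) \ge c R_n$ from the Binomial argument, and then establish $\liminf_{n\to\infty} R_n/n > 0$ almost surely. This is a Kesten--Spitzer--Whitman--type statement: Assumption~\ref{Ass:rw}(3) with $d_s/2 > 1$ makes the on-diagonal heat kernel summable, hence the walk is transient with a uniform lower bound on the escape probability (via reversibility and \eqref{eq:counting1}); Kingman's subadditive ergodic theorem then gives $R_n/n \to p_\infty > 0$ a.s.

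The main obstacle, in my view, is the upper comparison $L_n \le C R_n$ in Part~(I). Lemma~\ref{Lem:LIL70} must be applied to the random connected set $\{X_0,\ldots,X_n\}$ in a way that produces a tour through the prescribed endpoints $X_0$ and $X_n$, and the comparison has to hold with exceptional probabilities small enough (typically summable along a dyadic subsequence) so that the bound is valid for all sufficiently large $n$ almost surely. Once this quantitative comparison is secured, transferring the double-sided LILs from $R_n$ to $d_{\mathbb{Z}_2\wr G}(Y_0,Y_n)$ is essentially routine.
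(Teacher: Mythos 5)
Your treatment of Part (I) follows the paper's route: the two-sided comparison between $d_{\mathbb{Z}_2\wr G}(Y_0,Y_n)$ and $R_n$ is exactly what the paper establishes in Propositions \ref{Prop:LIL40} and \ref{Prop:LIL60}, and the LILs are then imported from Theorem \ref{Thm:LILdisc}. Two remarks. First, ``Borel--Cantelli since $R_n\to\infty$ a.s.'' is not by itself enough for the lower comparison: the Chernoff bound gives $P(\sharp\Supp(\eta_n-\eta_0)\le R_n/4)\le E[e^{-R_n/16}]$, and to make this summable you need a quantitative lower bound on $R_n$ holding outside a summable exceptional event; the paper obtains this from Lemma \ref{Lem:LIL50}, which yields $P_x(\max_{0\le j\le n}d(x,X_j)\le n^{1/(2d_w)})\le c_1\exp(-c_2 n^{1/2})$ and hence controls $P(R_n\le n^{1/(2d_w)})$. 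Second, the upper comparison that you flag as ``the main obstacle'' is in fact deterministic: Lemma \ref{Lem:LIL70} is applied pathwise to the connected subgraph spanned by $\{X_0,\dots,X_n\}$ and gives $d_{\mathbb{Z}_2\wr G}(Y_0,Y_n)\le(2M+1)R_n$ for every realization and every $n$, so there are no exceptional probabilities to control.

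The genuine gap is in the lower bound of Part (II). The Kesten--Spitzer--Whitman/Kingman argument you invoke needs a stationary subadditive structure: for a random walk on a group, $R_{m,n}=\sharp\{X_m,\dots,X_n\}$ has shift-invariant law because the increments are i.i.d., but for a reversible Markov chain on a general fractal graph $(X_{m+1},\dots,X_{n+1})$ does not have the same law as $(X_m,\dots,X_n)$, so Kingman's theorem does not apply, and one cannot expect $R_n/n$ to converge to a single constant $p_\infty$ in this generality. The paper's substitute is Proposition \ref{Prop:LILTransi30} (an improvement of Okamura's result), which under the uniform resistance condition $(U)$ and the tail bound $\sup_xP_x(M<T_x^+<\infty)=O(M^{-\delta})$ --- here supplied by \eqref{eq:21dow} with $d_s/2>1$ --- gives only the two-sided bound $1-F_2\le\liminf_{n}R_n/n\le\limsup_{n}R_n/n\le1-F_1$ with $F_1=\inf_xP_x(T_x^+<\infty)$ and $F_2=\sup_xP_x(T_x^+<\infty)$; positivity of the lower constant then requires the separate step $F_2<1$, proved via $G(x,x)-1=F(x,x)G(x,x)$ and summability of $p_n(x,x)$. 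Your instinct that a uniform escape probability is the key ingredient is correct, but converting it into an almost-sure liminf statement requires this blocking argument along the subsequence $n^k$ rather than an ergodic theorem.
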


\begin{remark} \label{Thm:rem25}
\begin{enumerate}
    \item[$(1)$] Note that  in Theorem \ref{Thm:LILrectra}(II) we do not need Assumption \ref{Ass:rw}(4) but only need the on-diagonal upper bound \eqref{eq:21dow}.  
           Since the transient case is discussed under a general framework in \cite{Okamura} (see subsection \ref{subsec:LIL44}),
           we do not pursue the minimum assumption for \eqref{eq:LILtransi30} to hold. 
    
    \item[$(2)$]  We can obtain the same results (by the same proof) if we replace $\mathbb{Z}_2$ by  
            a finite graph $H$ with $\sharp H \ge 2$ and  $p^{( \mathbb{Z}_2) }$ by $p^{(H)}$, where $p^{(H)}$ is the transition probability on $H$ given by 
                      \begin{gather*} 
                                      p^{(H)} (a,b) = \frac{1}{ \sharp H }, \qquad \text{for all $a,b \in V(H)$}.
                      \end{gather*}            

    \item[$(3)$]  For each $0 < \alpha < 1$, Rau \cite[Proposition 1.2]{Rau}  constructed a graph $G_{\alpha}$ such that
           the random walk on $G_{\alpha}$ satisfies the following heat kernel estimates:
                   \begin{gather} \label{HKvaro} 
                              p_{2n} (x,x) \approx  \exp ( -n^{\alpha} ).
                   \end{gather}
         For the case ${1}/{3} \le \alpha < 1$, the  graphs constructed by Rau are the wreath product on $\mathbb{Z}$, 
           but the fiber graphs are different site by site.
         (The definition of wreath product given by Rau is more general than ours.) 

          On the other hand,  for each $d_f , d_w $ such that $2 \le d_w \le 1 + d_f$ and $d_f \ge 1$, 
          Barlow \cite[Theorem 2]{Barlow2} constructed weighted graphs that satisfy Assumption  \ref{Ass:rw}. 
          Combining this and Theorem \ref{Thm:HK}, for any given $1/3 \le \alpha < 1$ we can give an alternative example where the heat kernel enjoys \eqref{HKvaro}. 
       
   \item[$(4)$]  For the case of $d_s/2 = 1$, we have not been able to obtain the LIL in general.     
        However, one can obtain the LIL for the case of $\mathbb{Z}^2$ as follows. (Note that $d_s/2 =1$ in this case since $d_f = d_w = 2$.)    
      
        Define $R_n =\sharp \{ X_0 , \cdots , X_n \}$. 
        Dvoretzky and Erd\H{o}s \cite[Theorem 1,4]{DE} proved the following law of large numbers of $R_n$:  
                     \begin{align*}
                               \lim_{n \to \infty} \frac{ R_n}{ \pi n/\log n}
                                        &= 1,   \qquad \text{$P$-a.s.}                                              
                     \end{align*}
        In Propositions \ref{Prop:LIL40} and \ref{Prop:LIL60}, we will show that $\frac{1}{4} R_n  \le d_{\mathbb{Z}_2 \wr \mathbb{Z}^2} (Y_0 , Y_n) $ 
        for all but finitely many $n$ and that there exists a positive constant $C>0$ such that $d_{\mathbb{Z}_2 \wr \mathbb{Z}^2} (Y_0, Y_n) \le CR_n$ for all $n$. 
       Using these facts, we see that there exist positive constants $c_1, c_2 >0$ such that for all $\bm{x} \in V(\mathbb{Z}_2 \wr G)$ 
                     \begin{align*}
                           c_1 \le  \liminf_{n \to \infty} \frac{ d_{\mathbb{Z}_2 \wr \mathbb{Z}^2} (Y_0 , Y_n)}{ n/\log n }
                                   \le \limsup_{n \to \infty} \frac{ d_{\mathbb{Z}_2 \wr \mathbb{Z}^2} (Y_0 , Y_n)}{ n/\log n} \le c_2,  \qquad \text{$P_{\bm{x}}$-a.s. }  
                     \end{align*}    
       As we see, the exponents differ from those in the cases of $d_s/2<1$ and $d_s/2 > 1$. \end{enumerate}
\end{remark}

\section{Consequences of heat kernel estimates}   \label{Sec:ConseqHK}
In this section, we give preliminary results obtained from the  sub-Gaussian heat kernel estimates \eqref{eq:21dow}, \eqref{eq:210} and \eqref{eq:211}. 
Throughout this section, we assume that Assumption \ref{Ass:rw} (1), (2) hold.

First, the following can be obtained by a simple modification of \cite[Lemma 3.9]{Barlow1}. 
 (Note that \eqref{eq:211} is not needed here.)  
\begin{lemma}  \label{Lem:lower40}
Suppose \eqref{eq:210}. Then there exist positive constants $c_{1} , c_{2} > 0$ such that 
                 \begin{gather*}
                         P_y \left( \max_{0 \le j \le n} d(x,X_j) \ge 3r \right)
                             \le  c_{1} \exp \left( -c_{2} \left( \frac{r^{d_w}}{n} \right)^{\frac{1}{d_w -1}} \right) 
                 \end{gather*}  
  for all $n \ge 1, r  \ge 1, x,y \in V(G)$ with $ d(x,y) \le r$.
\end{lemma}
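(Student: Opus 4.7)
The plan is to first convert the pointwise heat kernel estimate \eqref{eq:210} into a tail bound for $d(x, X_n)$, and then upgrade this to the claimed maximal inequality via a strong Markov argument. For the pointwise step, I would write
$$
P_y\bigl(d(x,X_n)\ge 2r\bigr)=\sum_{z:\,d(x,z)\ge 2r}p_n(y,z)
=\sum_{z:\,d(x,z)\ge 2r}\frac{p_n(y,z)}{m(z)}m(z),
$$
and bound each term using \eqref{eq:210}. Since $d(x,y)\le r$, for $z$ with $d(x,z)\ge 2r$ we have $d(y,z)\ge r$, so \eqref{eq:210} gives an exponential factor in $d(y,z)$. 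Splitting the sum over dyadic annuli $A_k=\{z:2^k r\le d(y,z)<2^{k+1}r\}$, the $d_f$-set condition \eqref{eq:dfset} controls $\sharp A_k$ by $c(2^k r)^{d_f}$, while the sub-Gaussian factor decays like $\exp(-c'(2^k r)^{d_w/(d_w-1)}n^{-1/(d_w-1)})$, so polynomial volume growth is absorbed and one obtains
$$
P_y\bigl(d(x,X_n)\ge 2r\bigr)\le c\exp\!\left(-c'\bigl(r^{d_w}/n\bigr)^{1/(d_w-1)}\right). \qquad (\ast)
$$

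For the maximal step, let $\tau=\inf\{j\ge 0:d(x,X_j)\ge 3r\}$. If $\tau=k\le n$ and $X_\tau=z$ (so $d(x,z)\ge 3r$), then $d(z,X_n)\le r$ forces $d(x,X_n)\ge 2r$. By the strong Markov property,
$$
P_y\bigl(d(x,X_n)\ge 2r\bigr)\ge\sum_{k\le n}\sum_{z:\,d(x,z)\ge 3r}P_y(\tau=k,X_\tau=z)\,P_z\bigl(d(z,X_{n-k})\le r\bigr).
$$
Applying $(\ast)$ with $x=y=z$ to the inner probability gives $P_z(d(z,X_{n-k})>r)\le c\exp(-c'(r^{d_w}/(n-k))^{1/(d_w-1)})$, which is at most $1/2$ whenever $n-k\le \kappa r^{d_w}$ for a suitable small constant $\kappa>0$. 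Hence, provided $n\le \kappa r^{d_w}$, we get $P_y(\tau\le n)\le 2P_y(d(x,X_n)\ge 2r)$, and the desired maximal bound follows from $(\ast)$.

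Finally, the complementary range $n>\kappa r^{d_w}$ is handled trivially: the exponent $(r^{d_w}/n)^{1/(d_w-1)}$ is then bounded above by a universal constant, so $\exp\!\left(-c_2(r^{d_w}/n)^{1/(d_w-1)}\right)$ is bounded below by a positive constant, and the inequality holds for free after enlarging $c_1$. The main obstacle is purely technical: organizing the dyadic sum in $(\ast)$ so that $\sum_k (2^k r)^{d_f}\exp(-c'(2^k r)^{d_w/(d_w-1)}n^{-1/(d_w-1)})$ is comparable to the $k=0$ term, which is a standard manipulation given that $d_w>1$ and the exponential dominates any polynomial factor. All other ingredients, namely $(\ast)$ itself and the strong Markov splitting, are direct adaptations of the argument in \cite[Lemma~3.9]{Barlow1}, so no new idea is required beyond replacing continuous volume growth by the discrete $d_f$-set condition \eqref{eq:dfset}.
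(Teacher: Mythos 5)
Your proposal is correct and follows essentially the route the paper intends: the paper gives no written proof, only the remark that the lemma follows by a simple modification of \cite[Lemma 3.9]{Barlow1}, and that argument is exactly your two-step scheme (sum the upper bound \eqref{eq:210} over dyadic annuli using \eqref{eq:dfset} to get the single-time tail bound, then upgrade to the maximal inequality by the strong Markov/reflection argument, treating the regime $n > \kappa r^{d_w}$ trivially by adjusting constants).
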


\medskip

The following lemma will be used in subsection $\ref{subsec:LIL41}$. 
Note that unlike Lemma \ref{Lem:lower40}, we need only weaker condition \eqref{eq:21dow} here. 
\begin{lemma}   \label{Lem:LIL50}
        Suppose \eqref{eq:21dow}. Then there exist positive constants $c_1, c_2>0$ such that 
             \begin{gather}    \label{eq:LIL42}
                          P_x \left( \max_{0 \le j \le n } d(x,X_j) \le r \right)  \le c_1\exp \left( -c_2 \frac{n}{r^{d_w}} \right)   
            \end{gather} 
       for all $x \in V(G) , n,r \ge 1$.
\end{lemma}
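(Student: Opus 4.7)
The plan is to combine the on-diagonal heat kernel upper bound \eqref{eq:21dow} with the volume growth \eqref{eq:dfset} to obtain a ``one-step'' trapping estimate, and then iterate via the Markov property. Throughout, I use the exponent $d_w := 2d_f/d_s$, which the excerpt already identifies as the compatibility exponent between \eqref{eq:21dow} and \eqref{eq:dfset}.

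First I would establish the following one-step estimate: there is a constant $C \ge 1$ such that, for every $r \ge 1$ and every $x \in V(G)$, setting $T := \lceil C r^{d_w} \rceil$ one has
\[
\sup_{y \in V(G)} P_y\bigl(X_T \in B(x,r)\bigr) \le \tfrac{1}{2}.
\]
Indeed, \eqref{eq:21dow} gives $p_T(y,z) \le c_1 T^{-d_s/2} m(z)$ for all $y,z \in V(G)$, so summing over $z \in B(x,r)$ and invoking \eqref{eq:dfset},
\[
P_y\bigl(X_T \in B(x,r)\bigr) \;\le\; c_1 T^{-d_s/2} m\bigl(B(x,r)\bigr) \;\le\; c' T^{-d_s/2} r^{d_f} \;=\; c' C^{-d_s/2},
\]
where the final equality uses $d_w = 2d_f/d_s$. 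Choosing $C$ large enough makes this $\le 1/2$.

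Next I would iterate. Put $k := \lfloor n/T \rfloor$ and observe the inclusion
\[
\Bigl\{\max_{0 \le j \le n} d(x,X_j) \le r\Bigr\} \;\subset\; \bigcap_{\ell=1}^{k} \bigl\{X_{\ell T} \in B(x,r)\bigr\},
\]
which is vacuous when $k=0$. Applying the Markov property at times $(k-1)T, (k-2)T, \ldots, T$ in turn and bounding each conditional probability by the one-step estimate yields
\[
P_x\Bigl(\max_{0 \le j \le n} d(x,X_j) \le r\Bigr) \;\le\; \Bigl(\sup_{y \in V(G)} P_y\bigl(X_T \in B(x,r)\bigr)\Bigr)^{k} \;\le\; 2^{-k}.
\]
When $n \ge 2T$ we have $k \ge n/(2T) \ge c\, n/r^{d_w}$, giving the desired bound $c_1 \exp(-c_2 n / r^{d_w})$. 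In the remaining regime $n < 2T = O(r^{d_w})$ the ratio $n/r^{d_w}$ is bounded by an absolute constant, so the inequality \eqref{eq:LIL42} holds trivially after enlarging $c_1$.

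The only mildly delicate point is fixing the constant $C$ so that the one-step return probability drops below $1/2$ uniformly in $y$ and $r$; this is a direct consequence of \eqref{eq:21dow} and \eqref{eq:dfset} together, and after it is done the Markov iteration is entirely standard.
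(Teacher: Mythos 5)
Your proof is correct and follows essentially the same strategy as the paper's: a one-step confinement estimate of the form $\le 1/2$ over a time block of length $\asymp r^{d_w}$, obtained from the on-diagonal bound \eqref{eq:21dow} together with the volume growth \eqref{eq:dfset}, followed by iteration via the Markov property. The only (harmless) differences are that the paper keeps the block length $[r^{d_w}]$ fixed and shrinks the spatial radius to $2c_1 r$ (which forces it to treat the sub-unit-radius case separately), whereas you keep the radius $r$ and enlarge the block length to $\lceil C r^{d_w}\rceil$, and you track only the positions $X_{\ell T}$ rather than the running maximum over each block --- both variants yield the stated bound after adjusting constants.
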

\begin{proof}
We first show that there exists a positive constant $c_{1} > 0$ and a positive integer $R$ such that 
        \begin{gather}   \label{eq:LIL40}
               P_x \left( \max_{0 \le j \le [r^{d_w}] } d(x,X_j)  \le 2c_{1} r \right) \le \frac{1}{2}  
        \end{gather}
   for all $x \in V(G)$ and for all $r \ge R$, where $[r^{d_w}]$ means the greatest integer which is less than or equal to $r^{d_w}$.
Using \eqref{eq:21dow}, for $r\ge 1$ we have 
        \begin{align*}
               & P_x  \left(  \max_{0 \le j \le [r^{d_w}] } d(x,X_j)  \le 2 c_{1} r \right)    
                      \le   P_x (d(x,X_{ [r^{d_w}] }) \le 2c_{1}r )  \\
               & \le  c_2 \frac{1}{ ([r^{d_w}])^{d_f/d_w} }  \sum_{y \in B(x , 2c_{1}r) } m(y)  
                    \le  c_3   \frac{1}{ r^{d_f}} m( B(x,  2c_{1} r) ).  
        \end{align*}
Recall that $c_4 r^{d_f} \le m (B(x,r)) \le c_5 r^{d_f}$ for $r \in \mathbb{N} \cup \{  0 \} $ by \eqref{eq:dfset}. 
Take $c_1$ as a positive constant which satisfies $c_3 (2c_1)^{d_f} \le \frac{1}{2c_5}$.  
We first consider the case of $2c_1 r \ge 1$. In this case,  by the above estimate we have 
       \begin{align*}
                P_x  \left(  \max_{0 \le j \le [r^{d_w}] } d(x,X_j)  \le 2 c_{1} r \right)  
                      \le  c_3   \frac{1}{r^{d_f}} m( B(x,  2c_{1} r) )  
                      \le c_3  c_5 (2c_1)^{d_f} \le \frac{1}{2}.
       \end{align*}
 Next we consider the case of $2c_1 r < 1$. In this case, $m( B(x,2c_1 r) ) =m ( \{ x \} ) \le c_5$.  
 Take $R$ such that $R > (2c_3c_5)^{1/d_f}$.
 By the above estimate we have  
        \begin{align*}
               P_x \left( \max_{0 \le j \le [r^{d_w}] } d(x,X_j)  \le 2 c_{1} r \right)   
                       \le  c_3   \frac{1}{r^{d_f}} m( B(x,  2c_{1} r) ) 
                       \le  c_3   \frac{c_5}{r^{d_f}} \le \frac{1}{2}
        \end{align*}
   for $r \ge R$. We thus obtain \eqref{eq:LIL40}.

We now prove \eqref{eq:LIL42}. Let $r \ge R$. It is enough to consider the case $n\ge [r^{d_w}]$ since otherwise \eqref{eq:LIL42} is immediate 
by adjusting the constants. Let $k\ge1$ be such that $k[r^{d_w}]\le n< (k+1)[r^{d_w}]$
and let $t_i = i [r^{d_w}] $.
Then by the Markov property and (\ref{eq:LIL40}) we have
      \begin{align*}
                P_x \left( \max_{0 \le j \le n}   d(x,X_j ) \le   c_{1}r \right)   
                                      & \le P_x \left( \bigcap_{ 0 \le i \le k-1} \left\{ \max_{t_i \le j \le t_{i+1} }  d( X_{t_i} , X_{j } ) 
                                                                                   \le 2c_{1}r \right\} \right)   \\
                                      & \le  \left\{ \sup_y P_y \left(  \max_{0 \le j \le [r^{d_w}] }  d(y,X_j ) \le 2c_{1} r  \right) \right\}^k   \\
                                      & \le \left(  \frac{1}{2}   \right)^k  
                                             \le c_6 \exp (-c_7 k ) 
                                             \le c_8 \exp (-c_{9} nr^{-d_w}).
     \end{align*} 
It is immediate for the case of $1 \le r \le R$ from the above estimate. 
Hence we obtain  $(\ref{eq:LIL42} )$ by adjusting the constants.  
\end{proof}

\medskip 

In the next proposition, we show that Lemma \ref{Lem:LIL50} is sharp up to constants if we assume both $(\ref{eq:210})$ and $(\ref{eq:211})$. 
The idea of the proof is based on \cite[Lemma 7.4.3]{SC1}, where a similar estimate was given for a class of random walks on $\mathbb{Z}^d$. 

\begin{proposition}    \label{Prop:lower80}
Suppose \eqref{eq:210} and \eqref{eq:211}. 
 Then there exist $N_0 \ge 1$ and positive constants $ c_1, c_2>0$ such that 
      \begin{gather*}
             P_x \left( \max_{0 \le j \le n } d(x,X_j) \le r \right)  \ge c_1\exp \left( -c_2 \frac{n}{r^{d_w}} \right) 
      \end{gather*} 
  for all $r \ge N_0$ and $n \ge 4^{ \frac{d_w}{d_w -1} }$. 
\end{proposition}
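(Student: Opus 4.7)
The strategy is to establish a ``one-period'' lower bound via the heat kernel estimate \eqref{eq:211}, then iterate it using the strong Markov property. With $T := \lceil \alpha r^{d_w}\rceil$ for a small $\alpha>0$ to be chosen and $\tau := \inf\{j : d(x, X_j) > r\}$, the key intermediate claim is that for every $y \in B(x, r/2)$ there exists $T^*(y) \in \{T, T+1\}$ with
\[
P_y\bigl(X_{T^*(y)} \in B(x, r/2),\; \tau > T^*(y)\bigr) \;\ge\; c_0,
\]
where $c_0 > 0$ is independent of $r$ and $y$. Iterating this over $\lceil n/T \rceil$ disjoint time blocks then yields the bound $\asymp \exp(-c_2 n/r^{d_w})$.

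For the one-period claim, note first that for $y, z \in B(x, r/2)$ we have $d(y,z) \le r \le T$ as soon as $r \ge \lceil \alpha^{-1/(d_w-1)}\rceil =: N_0$, so \eqref{eq:211} applies and the exponential factor is bounded below by the positive constant $\exp(-c_4\alpha^{-1/(d_w-1)})$. Summing \eqref{eq:211} over $z \in B(x, r/2)$ and using \eqref{eq:dfset} to note that $m(B(x, r/2)) \asymp V(y, T^{1/d_w}) \asymp r^{d_f}$, I obtain
\[
P_y(X_T \in B(x, r/2)) + P_y(X_{T+1} \in B(x, r/2)) \;\ge\; c_5(\alpha).
\]
On the other hand, an exit of $B(x, r)$ starting from $y \in B(x, r/2)$ forces $\max_{j} d(y, X_j) \ge r/2$; applying Lemma \ref{Lem:lower40} with its ``$r$'' replaced by $r/6$ gives
\[
P_y(\tau \le T+1) \;\le\; c_6 \exp\bigl(-c_7 (6^{d_w}\alpha)^{-1/(d_w-1)}\bigr),
\]
which tends to $0$ as $\alpha \downarrow 0$. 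I would then fix $\alpha$ small enough that the right-hand side is at most $c_5(\alpha)/4$. Subtracting, at least one of $T^* \in \{T, T+1\}$ satisfies the one-period claim with $c_0 := c_5(\alpha)/4$.

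To iterate, set $y_0 := x$, $\sigma_i := \sigma_{i-1} + T^*(y_{i-1})$, and $y_i := X_{\sigma_i}$. Let $E_k := \{\tau > \sigma_k,\; y_i \in B(x, r/2)\text{ for all } i \le k\}$. The strong Markov property at $\sigma_{i-1}$ combined with the one-period claim gives $P_x(E_k) \ge c_0 P_x(E_{k-1}) \ge c_0^k$. Since $kT \le \sigma_k \le k(T+1) \le 2kT$, choosing $k := \lceil n/T \rceil$ ensures $\sigma_k \ge n$, and therefore
\[
P_x\Bigl(\max_{0\le j \le n} d(x, X_j) \le r\Bigr) = P_x(\tau > n) \;\ge\; P_x(E_k) \;\ge\; c_0^{\lceil n/T\rceil} \;\ge\; c_1 \exp(-c_2 n/r^{d_w})
\]
after absorbing constants.

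The main obstacle is the parity built into \eqref{eq:211}, which only controls the sum $p_T + p_{T+1}$ from below, forcing $T^*(y)$ to depend on the current state and rendering $\sigma_k$ a random time. The strong Markov formalism handles this adaptively, but one must track $\sigma_k \in [kT, k(T+1)]$ so that the iteration actually covers $[0,n]$. The hypotheses $r \ge N_0$ and $n \ge 4^{d_w/(d_w-1)}$ precisely guarantee the regime in which \eqref{eq:211} can be applied on the relevant scale and the exit tail from Lemma \ref{Lem:lower40} is small.
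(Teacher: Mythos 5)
Your overall architecture --- a one-block estimate obtained by combining the near-diagonal lower bound \eqref{eq:211} with the exit estimate of Lemma \ref{Lem:lower40}, followed by iteration via the strong Markov property --- is the same as the paper's (Lemmas \ref{Lem:lower50} and \ref{Lem:lower70}), and your iteration step is sound. The gap is in the one-period claim, specifically in the step ``fix $\alpha$ small enough that the right-hand side is at most $c_5(\alpha)/4$.'' Because you sum \eqref{eq:211} over \emph{all} of $B(x,r/2)$ using the worst-case exponential factor (distance up to $r$, time $T\approx\alpha r^{d_w}$), the lower bound you actually obtain is
\[
c_5(\alpha)\;\asymp\;\alpha^{-d_f/d_w}\exp\bigl(-c_4\,\alpha^{-1/(d_w-1)}\bigr),
\]
which itself tends to $0$ exponentially fast in $\alpha^{-1/(d_w-1)}$, at a rate governed by the constant $c_4$ of \eqref{eq:211}. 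The exit bound from Lemma \ref{Lem:lower40} decays like $\exp(-c\,\alpha^{-1/(d_w-1)})$ at a rate governed by the constant $c_2$ of \eqref{eq:210}. These two constants are unrelated, so there is no guarantee that any $\alpha$ makes the exit probability at most $c_5(\alpha)/4$: if $c_4$ exceeds the exit rate, the required inequality fails for all small $\alpha$, and nothing forces it at $\alpha$ of order one either. The comparison that drives the whole proof is therefore not established.

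The repair is exactly what the paper's Lemma \ref{Lem:lower50} does: do not sum over the whole target ball with the worst-case factor. Pick the point $x_T$ on a geodesic from $y$ to $x$ at distance $[T^{1/d_w}]+1$ from $y$ and sum \eqref{eq:211} only over $z\in B(x_T,[T^{1/d_w}])$. This ball lies inside $B(x,r/2)$ (so the iteration still closes), and every $z$ in it satisfies $d(y,z)\le 2[T^{1/d_w}]+1$, so the exponential factor becomes the universal constant $\exp(-c_4(2\cdot 3^{d_w})^{1/(d_w-1)})$ and, by \eqref{eq:dfset}, the return probability is bounded below by a constant independent of $\alpha$. With that, your ``choose $\alpha$ small'' step becomes legitimate. (The paper instead fixes the time block at $[r^{d_w}]$ and makes the exit probability small by enlarging the no-exit radius to $3\eta r$ with $\eta$ large, as in Lemma \ref{Lem:lower70}; either tuning works once the return probability is a genuine constant.)
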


The proof consists of the following two lemmas. 
\begin{lemma}  \label{Lem:lower50}
Under \eqref{eq:211} there exists $\epsilon \in (0,1)$  such that 
     \begin{gather*}
              P_y (d(x, X_{n} ) >   r  )    \le 1- \epsilon 
     \end{gather*} 
  for all $r\ge 1$, $n \ge 4^{\frac{d_w}{d_w -1}}$ with $n < r^{d_w}$ and $x, y  \in V(G) $ with $d(x,y) \le  r$.

\end{lemma}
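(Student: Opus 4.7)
The plan is to apply the near-diagonal heat kernel lower bound \eqref{eq:211}, summed over vertices $z$ lying in the intersection $B(y, n^{1/d_w}) \cap B(x, r)$. Since $n < r^{d_w}$ gives $n^{1/d_w} < r$, the easy case $d(x,y) \le r - n^{1/d_w}$ yields $B(y, n^{1/d_w}) \subseteq B(x, r)$ outright. In the remaining range $d(x,y) \in (r - n^{1/d_w}, r]$, I would first prove the following elementary geometric fact: for $s \in [1, r]$, taking $k := \max(0, \lceil (s - r + d(x,y))/2 \rceil)$ and $w$ the vertex on a geodesic from $y$ to $x$ at distance $k$ from $y$, the ball $B(w, s - k)$ sits inside $B(y, s) \cap B(x, r)$ with $s - k \ge \lfloor s/2 \rfloor$. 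This is a direct triangle inequality calculation using $d(w, y) = k$ and $d(w, x) = d(x,y) - k$. Combined with the $d_f$-set condition, this yields $m(B(w, s-k)) \ge c\, V(y, n^{1/d_w})$ for some absolute $c > 0$.

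Next, I would apply \eqref{eq:211} with $s = \lfloor n^{1/d_w} \rfloor$. For every $z \in B(w, s-k)$ we have $d(y, z) \le s \le n$, so the estimate applies, and $(d(y,z)^{d_w}/n)^{1/(d_w - 1)} \le 1$ bounds the exponential factor below by $e^{-c_4}$. Summing over $z \in B(w, s-k) \subseteq B(x, r)$ yields
\begin{equation*}
P_y(X_n \in B(x,r)) + P_y(X_{n+1} \in B(x,r)) \;\ge\; c_3 e^{-c_4} \cdot \frac{m(B(w, s-k))}{V(y, n^{1/d_w})} \;\ge\; c_5
\end{equation*}
for some constant $c_5 > 0$ independent of $x, y, r, n$. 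The hypothesis $n \ge 4^{d_w/(d_w-1)}$ enters to guarantee $n^{1/d_w} \ge 4^{1/(d_w-1)} > 1$, keeping the geometric construction non-degenerate in the extremal configuration $d(x,y) = r$.

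The remaining task is to convert the bound on $P_y(X_n \in B(x,r)) + P_y(X_{n+1} \in B(x,r))$ into a bound on $P_y(X_n \in B(x,r))$ alone. If $P_y(X_n \in B(x,r)) \ge c_5/2$, the proof is complete with $\epsilon = c_5/2$. Otherwise $P_y(X_{n+1} \in B(x,r)) \ge c_5/2$, and I would combine the inclusion $\{X_{n+1} \in B(x,r)\} \subseteq \{X_n \in B(x, r+1)\}$ (consecutive positions are adjacent) with the $p_0$-condition — any $w$ with $d(x, w) = r + 1$ has a neighbor in $B(x, r)$ lying on a geodesic to $x$, taken with probability at least $p_0$ — to recover a lower bound on $P_y(X_n \in B(x, r))$ via a Markov-property argument. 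The main obstacle is precisely this parity issue: because \eqref{eq:211} controls the averaged kernel $p_n + p_{n+1}$ (to accommodate bipartite-like behaviour), some additional combinatorial work is required to extract a clean single-time bound.
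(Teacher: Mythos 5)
Your main construction---summing the near-diagonal lower bound \eqref{eq:211} over a ball of radius comparable to $n^{1/d_w}$ centered at a point on a geodesic from $y$ to $x$, chosen so that the ball lies inside $B(x,r)$ and has measure comparable to $V(y,n^{1/d_w})$---is exactly the paper's strategy, and your geometric bookkeeping for placing that ball is fine. The genuine gap is in the last step, where you try to pass from the averaged bound $P_y(X_n\in A)+P_y(X_{n+1}\in A)\ge c_5$ to a bound on $P_y(X_n\in B(x,r))$ alone. Your proposed repair runs in the wrong time direction: from $P_y(X_{n+1}\in B(x,r))\ge c_5/2$ you can only infer $P_y(X_n\in B(x,r+1))\ge c_5/2$, and the $p_0$-condition then converts information about $X_n$ into information about $X_{n+1}$ (namely $P_y(X_{n+1}\in B(x,r))\ge p_0\,P_y(X_n\in B(x,r+1))$), which lands you back at time $n+1$; there is no way to move from $\{X_n\in B(x,r+1)\}$ to $\{X_n\in B(x,r)\}$ at the \emph{same} time $n$, so the argument as sketched does not close.

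The fix---and what the paper does---is to aim the target set one unit \emph{inside} the ball: choose the geodesic ball $A\subset B(x,r-1)$ and apply \eqref{eq:211} with time parameter $n-1$. Since consecutive positions of the walk are at distance at most $1$, both $\{X_{n-1}\in B(x,r-1)\}$ and $\{X_n\in B(x,r-1)\}$ are contained in $\{d(x,X_n)\le r\}$, whence
\begin{equation*}
2\,P_y\bigl(d(x,X_n)\le r\bigr)\;\ge\;P_y\bigl(d(x,X_{n-1})\le r-1\bigr)+P_y\bigl(d(x,X_n)\le r-1\bigr)\;\ge\;\sum_{z\in A}\frac{p_{n-1}(y,z)+p_n(y,z)}{m(z)}\,m(z),
\end{equation*}
and the right-hand side is bounded below by a constant exactly as in your computation. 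This one-line inclusion replaces your case split and the $p_0$-argument entirely; note also that it uses no hypothesis beyond \eqref{eq:211}, consistent with the lemma's statement, whereas your route would import the $p_0$-condition. If you shift your target ball to sit inside $B(x,r-1)$ (your construction already gives this with $r$ replaced by $r-1$, at the harmless cost of a constant in the volume bound since $n^{1/d_w}<r$), the rest of your proof goes through.
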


\begin{proof}
We follow the argument in \cite[Lemma 7.4.7]{SC1}.  
Let $\gamma = 1/(d_w - 1)$.
Let $\ell_{x,y}$ be a geodesic path from $x$ to $y$ in $G$. Let $x_n \in \ell_{x,y}$ be the $([n^{1 / d_w} ] +1)$-th vertex from $y$. 
Then $B(x_n, [n^{1/d_w} ] ) \subset B(x,r-1)$ 
    since for all $ z \in B(x_n , [n^{1/d_w} ] ) $ we have $d(x,z) \le d(x,x_n) + d(x_n , z ) \le (d(x,y) - [n^{1 / d_w} ]-1) + [n^{1 / d_w} ] \le r-1$.
Also for all $z \in B(x_n ,  [n^{1/d_w} ] )$, we have $d(y,z) \le d(y , x_n) + d(x_n ,z)  \le 2  [n^{1 / d_w} ] +1$.
Hence, by \eqref{eq:211} and \eqref{eq:dfset} we have 
       \begin{align*}
                  & 2P_y \left( d(x,X_{n}) \le r \right) \ge  P_y \left( d(x , X_{n-1}) \le  r-1 \right) + P_y \left( d(x,X_{n}) \le r-1 \right)  \\    
                  \ge & c_1 \sum_{ z \in B(x_n , [n^{1/d_w}] )  } 
                              \frac{ m( z ) }{ V(y ,(n-1)^{1 / d_w}) } \exp \left[ -c_2 \left( \frac{d( y ,z )^{d_w}}{n-1} \right)^{ \gamma } \right]   \\
                  \ge & c_3 \left( \sum_{ z \in  B(x_n , [n^{1/d_w}] )  } \frac{ m(z) }{ n^{d_f / d_w} }  \right)  
                              \exp \left[ -c_2 (2 \cdot 3^{d_w})^{ \gamma }  \right] \ge  c_4 \exp \left[ -c_2  (2\cdot 3^{d_w})^{\gamma }  \right] ,
       \end{align*}
  provided $n \ge 4^{\frac{d_w}{d_w -1}}$ so that $d(y,z) \le 2n^{1/d_w} +1 \le 3 n^{1/d_w} \le  n-1$ for any $z \in B(x_n , [n^{1/d_w}])$.  
The proof completes by taking $\epsilon =   \frac{1}{2}  c_4 \exp \left[ -c_2  (2\cdot 3^{d_w})^{\gamma }  \right] $
(note that we may take $c_4<1$).  
\end{proof}

\begin{lemma} \label{Lem:lower70}
 Let $\epsilon$ be as in Lemma \ref{Lem:lower50}.  
 Then under \eqref{eq:210} and \eqref{eq:211} 
  there exists $\eta \ge 1$ such that for all $x,y \in V(G) $, positive integers $r \ge 8^{\frac{1}{d_w -1} }$ with $d(x,y) \le 2r$ 
     and for all integers $k \ge 0$ and  $\ell \ge 4^{\frac{d_w}{d_w -1}}$ with $k [r^{d_w}] \le \ell \le (k+1) [r^{d_w}]$, we have 
           \begin{gather*}
                  P_y \left( \max_{0 \le j \le \ell } d(x , X_j) \le 3 \eta  r,  d(x,X_{\ell} ) \le 2r \right) 
                               \ge \left( \frac{ \epsilon}{2} \right)^{k} \wedge \frac{\epsilon}{2}.
           \end{gather*}
\end{lemma}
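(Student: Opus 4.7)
The plan is to discretize $[0, \ell]$ into successive time blocks of length comparable to $[r^{d_w}]$, use Lemma \ref{Lem:lower50} at each block endpoint to keep $X$ inside $B(x, 2r)$, use Lemma \ref{Lem:lower40} to ensure $X$ does not exit $B(x, 3\eta r)$ within any block, and then concatenate via the Markov property.

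The core is a one-block estimate. For any integer $\tau$ with $4^{d_w/(d_w-1)} \le \tau \le 2[r^{d_w}]$ and any vertex $z$ with $d(x, z) \le 2r$, I would show
\begin{equation*}
P_z\Bigl(d(x, X_\tau) \le 2r,\ \max_{0 \le j \le \tau} d(x, X_j) \le 3\eta r\Bigr) \ge \epsilon/2.
\end{equation*}
Two ingredients are needed. Lemma \ref{Lem:lower50} applied with ``$r$'' replaced by $2r$ gives $P_z(d(x, X_\tau) \le 2r) \ge \epsilon$; the condition $\tau < (2r)^{d_w}$ holds since $\tau \le 2[r^{d_w}] \le 2r^{d_w} < 2^{d_w} r^{d_w}$ as $d_w > 1$. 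Lemma \ref{Lem:lower40} applied with ``$r$'' replaced by $\eta r$ (legal since $d(x, z) \le 2r \le \eta r$ once $\eta \ge 2$) gives
\begin{equation*}
P_z\Bigl(\max_{0 \le j \le \tau} d(x, X_j) \ge 3\eta r\Bigr) \le c_1 \exp\bigl(-c_2 \eta^{d_w/(d_w-1)}/2^{1/(d_w-1)}\bigr),
\end{equation*}
using the lower bound $(\eta r)^{d_w}/\tau \ge \eta^{d_w}/2$. Fixing $\eta \ge 2$ large enough that the right-hand side is at most $\epsilon/2$, subtraction yields the displayed one-block estimate.

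The case $k = 0$ of the lemma is then immediate by taking $\tau = \ell$ and $z = y$. For $k \ge 1$, set $s_0 = 0$, $s_i = i[r^{d_w}]$ for $i = 1, \ldots, k-1$, and $s_k = \ell$; the hypothesis $r \ge 8^{1/(d_w-1)}$ is exactly what ensures $[r^{d_w}] \ge 4^{d_w/(d_w-1)}$ (a short arithmetic check), and since $k[r^{d_w}] \le \ell \le (k+1)[r^{d_w}]$ the $k$ blocks $[s_{i-1}, s_i]$ all have length in $[[r^{d_w}], 2[r^{d_w}]]$, matching the hypotheses of the one-block estimate. Writing
\begin{equation*}
A_i = \{d(x, X_{s_i}) \le 2r\} \cap \Bigl\{\max_{s_{i-1} \le j \le s_i} d(x, X_j) \le 3\eta r\Bigr\}, \qquad i = 1, \ldots, k,
\end{equation*}
the Markov property combined with the one-block estimate yields $P_y(A_i \mid \mathcal{F}_{s_{i-1}}) \ge \epsilon/2$ on $\bigcap_{j < i} A_j$ (since the latter forces $d(x, X_{s_{i-1}}) \le 2r$). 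Multiplying gives $P_y\bigl(\bigcap_{i=1}^k A_i\bigr) \ge (\epsilon/2)^k$, and $\bigcap_i A_i$ is contained in the event in the statement.

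The main obstacle is the bookkeeping: a naive partition into $k+1$ blocks of length at most $[r^{d_w}]$ (with a short residual tail of length $\ell - k[r^{d_w}]$) would yield only $(\epsilon/2)^{k+1}$, weaker than claimed. Absorbing the residual into the last block, so that the number of blocks is exactly $k$ but the last can have length up to $2[r^{d_w}]$, is what preserves the correct exponent. This explains why the one-block estimate must be applied with radius $2r$ rather than $r$ (so that $\tau < (2r)^{d_w}$ still tolerates blocks of length up to $2[r^{d_w}]$), and why the factor $2^{1/(d_w-1)}$ is present in the choice of $\eta$.
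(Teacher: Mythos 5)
Your proposal is correct and follows essentially the same route as the paper: the one-block estimate obtained by combining Lemma \ref{Lem:lower40} (applied with radius $\eta r$ and a suitably large $\eta$) with Lemma \ref{Lem:lower50} (applied with radius $2r$) is exactly the paper's Step I, and your forward concatenation over $k$ blocks via the Markov property is just the unrolled version of the paper's induction on $k$, which likewise peels off a final block of length between $[r^{d_w}]$ and $2[r^{d_w}]$ to get the exponent $k$ rather than $k+1$.
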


\begin{proof}
We follow the argument in the proof of \cite[Lemma 7.4.3]{SC1}.
We prove the assertion by induction in $k$.

\underline{Step I}: We first prove the case $k=0,1$.  
Let $\gamma = 1/(d_w - 1)$.
  In general, $1 \le P(A) + P(B) + P( (A \cup B)^c )$ holds for any events $A,B$.   
  As $A, B$ take $A = \{ \max_{0 \le j \le \ell} d(x,X_j) > 3 \eta  r \}$, $B =\{ d(x,X_{\ell} ) > 2r \}$.
 Let $4^{\frac{d_w}{d_w -1}} \le \ell \le 2[r^{d_w}]$. By Lemma \ref{Lem:lower40} and Lemma \ref{Lem:lower50} we have
           \begin{align*}
                     1 &\le  P_y \left(  \max_{ 0 \le j \le \ell } d(x,X_j) > 3 \eta  r \right)   + P_y \left(   d(x,X_{\ell} ) > 2r \right)      
                                    + P_y \left(   \max_{0 \le j \le \ell} d(x , X_j)  \le  3 \eta  r , d(x,X_{\ell}) \le 2r \right) \\
                      &\le c_1 \exp \left[ - c_2 \left( \frac{ (\eta  r)^{d_w} }{\ell} \right)^{ \gamma } \right] 
                                + (1-\epsilon )  
                                +  P_y \left(   \max_{0 \le j \le \ell} d(x , X_j) \le  3 \eta  r , d(x,X_{\ell}) \le  2r \right) . 
           \end{align*}
 From above and using $\ell \le 2[r^{d_w}]$ we have 
          \begin{align*}
                      P_y \left(   \max_{0 \le j \le \ell} d(x , X_j) \le 3\eta  r , d(x,X_{\ell}) \le 2r \right) 
                      \ge \epsilon -  c_1 \exp \left[ - c_2 \left( \frac{ (\eta  r)^{d_w} }{\ell } \right)^{\gamma} \right]
                      \ge \epsilon - c_1 \exp \left[ -c_2  \left( \frac{ \eta^{d_w} }{2} \right)^{\gamma} \right].
           \end{align*}
Taking $\displaystyle \eta > \left\{ \frac{2^{\gamma}}{c_2} \log \left( \frac{2 c_1}{ \epsilon} \right) \right\}^{1/(\gamma d_w) } \vee 1$,  
we obtain
       \begin{gather*}
                     P_y \left( \max_{0 \le j \le \ell } d(x , X_j) \le 3\eta  r , d(x,X_{\ell}) \le 2r \right)  \ge \frac{\epsilon}{2}  
       \end{gather*}
 for $4^{\frac{d_w}{d_w -1}} \le \ell \le 2[r^{d_w}]$.

\underline{Step II}: Let $k \ge 1$ and assume that the result holds up to $k$.  
  Let $\ell $ satisfy  $(k+1) [r^{d_w}] \le \ell \le (k+2) [r^{d_w}]$. 
  Define $\ell^{\prime} = k[r^{d_w} ]$. 
  Then since $\ell^{\prime} \wedge (\ell - \ell^{\prime}) \ge [r^{d_w}] \ge \frac{1}{2} r^{d_w} \ge 4^{ \frac{ d_w}{ d_w -1}}$ by $r \ge 8^{\frac{1}{d_w -1}}$, 
       using the Markov property and induction hypothesis we have  
                 \begin{align*}
                          & P_y \left( \max_{ 0 \le j \le \ell } d(x,X_j) \le 3 \eta  r  ,  d(x,X_{\ell} ) \le 2r \right)  \\
                         \ge & P_y \left( \max_{ 0 \le j \le \ell }  d(x,X_j)  \le  3 \eta r , d(x,X_{\ell} ) \le 2r,  d(x,X_{\ell^{\prime}} ) \le 2r \right)   \\
                         = & E_y \left[ 1_{ \{ \max_{ 0 \le j \le \ell^{\prime} } d(x,X_j ) \le 3 \eta r  ,   d(x,X_{\ell^{\prime}} ) \le 2r \}  } 
                                  P_{ X_{\ell^{\prime}} }  \left( d(x, X_{\ell - \ell^{\prime} } ) \le 2r , 
                                                                       \max_{0 \le j \le \ell - \ell^{\prime} } d(x,X_j) \le 3 \eta  r \right)     \right] \\
                       \ge & \frac{\epsilon}{2}    P_y \left(   \max_{ 0 \le j \le \ell^{\prime} } d(x,X_j ) \le 3 \eta  r  ,  d(x,X_{\ell^{\prime}} ) \le 2 r \right)
                                 \ge \left( \frac{ \epsilon}{2} \right)^{k+1} .
                 \end{align*}
We thus complete the proof.              
\end{proof}

Given Lemma \ref{Lem:lower70}, it is straightforward to obtain 
Proposition \ref{Prop:lower80}.

\section{On diagonal heat kernel}  \label{Sec:HK}

In this section, we give the proof of Theorem \ref{Thm:HK}.

The lower bound follows by the same approach as in \cite[Section 7]{SC1} (cf. \cite[Section 7]{PS2} and \cite[Section 15.D]{Woess}). 
We use Proposition \ref{Prop:lower80} for the proof.

\medskip 

\begin{proof}[Proof of the lower bound of Theorem \ref{Thm:HK}.] 
 Let $\bm{x} = (\eta, x) \in V( \mathbb{Z}_2 \wr G)$.  
   As we said before we write $m_{\mathbb{Z}_2 \wr G} $ as $m$.
 For any finite subset $A \subset V(\mathbb{Z}_2 \wr G)$, using 
 the Cauchy-Schwarz inequality we have
       \begin{align}
               \frac{p_{2n} (\bm{x} ,\bm{x} )}{ m(\bm{x} ) }                       
                    &= \sum_{ \bm{y} \in V(\mathbb{Z}_2 \wr G) }  \frac{p_n ( \bm{x} , \bm{y}) p_{n} ( \bm{y} , \bm{x} )}{ m(\bm{x}) } 
                         = \sum_{ \bm{y} \in V(\mathbb{Z}_2 \wr G)  }  \frac{ p_n ( \bm{x} , \bm{y} )^2 }{ m(\bm{y} )  } 
                         \ge  \sum_{ \bm{y} \in A}  \frac{ p_n (\bm{x} , \bm{y} )^2 }{ m(\bm{y} ) }  
                         \ge   \frac{1}{ m(A) }  P_{\bm{x} } \left( Y_n \in A \right)^2.   \label{eq:ODU30}   
       \end{align}
Set $A:= \{ \bm{y} = (f , y) \in V(\mathbb{Z}_2 \wr G) \mid y \in B_G (x,r),  f(z) = 0$ for all $z \in V(G)$ such that $d(x,z) >r\}$. 
Using $(\ref{eq:dfset} )$ and $(\ref{eq:counting1})$, we have 
         \begin{align*}
                m_{\mathbb{Z}_2 \wr G} (A)   = \sum_{ y \in B_G (x,r) } m_G (y) 2^{ \sharp  B_G (x,r)}   
                   \le   c_1 r^{d_f} 2^{c_2 r^{d_f} },  
          \end{align*}
 and using Proposition \ref{Prop:lower80} we have
       \begin{align*}
                P_{\bm{x} } \left( Y_n \in A \right)   \ge P_x \left( \max_{0 \le j \le n} d(x, X_j)  \le r \right) 
                    \ge c_3 \exp \left[- c_4 \frac{n}{r^{d_w}} \right]
       \end{align*}
   provided $n \ge 4^{\frac{d_w}{d_w -1}}$ and $r \ge N_0$.  
Hence,  by $(\ref{eq:ODU30})$ we have
    \begin{align*}
              \frac{ p_{2n} (\bm{x} , \bm{x} )}{ m(\bm{x} ) } \ge c_5 \exp \left[ -c_6 \left( d_f \log r  +  r^{d_f}  +  \frac{n}{r^{d_w}} \right) \right] .
    \end{align*}
 Optimize the right-hand side (take $r= n^{1/(d_f + d_w)}$), 
 then we obtain 
        \begin{gather*}
                     \frac{p_{2n} (\bm{x} , \bm{x} )}{ m( \bm{x} ) }  
                           \ge  c \exp \left[ -C  n^{\frac{d_f}{ d_f + d_w }} \right] 
        \end{gather*}
    provided $n \ge 4^{\frac{d_w}{d_w-1}} \vee N_0^{d_f + d_w}$. 
 The lower bound for $1 \le n \le 4^{\frac{d_w}{d_w-1}} \vee N_0^{d_f + d_w}$ is obvious, and we thus complete the proof.  

\end{proof}

\bigskip

We next prove the upper bound of Theorem \ref{Thm:HK} (cf. \cite[Section 8]{PS2} and \cite[Section 15.D]{Woess}).

\begin{proof}[Proof of the upper bound of Theorem \ref{Thm:HK}.] 
Without loss of generality, we may and do assume $\eta_0$ is identically $0$.  
For the switch-walk-switch random walk $\{ Y_n = (\eta_n , X_n ) \}_{n \ge 0}$ on $\mathbb{Z}_2 \wr G$, 
   $\eta_n$ is  equi-distributed on $\{ f \in \prod_{z \in V(G)}  \mathbb{Z}_2 \mid \Supp (f - \eta_0) \subset \bar{R}_n \} $, 
   where $\bar{R}_n = \{ X_0, X_1, \cdots, X_n \}$. 
   Hence, setting $R_n = \sharp \bar{R}_n$, we have 
\[
    P_{\bm{x}}  \left(  Y_n = \bm{x} \right)  =  \sum_{k=0}^{n+1}  P_{ \bm{x} } \left( Y_n = \bm{x}  , R_n = k \right)  
                                                        \le   \sum_{k=0}^{n+1}   E_{ \bm{x} }   \left[  1_{ \{ R_n = k \}  } 2^{-k}   \right] \le  E_{ \bm{x} }  \left[  2^{-R_n}   \right].    
\]  
In \cite[Theorem 1.2]{Gibson},  Gibson showed the following Donsker-Varadhan type range estimate: for any $\nu > 0 $ and any $x \in V(G) $,
\[
           - \log E_x \left[ \exp \left\{ - \nu m ( \bar{R}_{ [n^{d_w} V(x,n)] } ) \right\} \right]  \asymp  V(x,n).      \] 
Note that $V(x,n) \asymp n^{d_f}$. Replacing $n$ with  $n^{1/(d_f + d_w) }$ we have  
\[
            E_x \left[ \exp \left\{ -\nu m( \bar{R}_{n} ) \right\}  \right]  \approx \exp \left[ - n^{d_f / (d_f + d_w) }  \right] .   
\]
Since $cm(\bar{R}_n ) \le R_n$ (due to $(\ref{eq:counting1})$),  
by the above estimates, we obtain the upper estimate, i.e.
  \begin{gather*}
                   \frac{p_{n} (\bm{x} , \bm{x} )}{ m( \bm{x} ) }  \le  c \exp \left[ -C  n^{\frac{d_f}{ d_f + d_w }} \right]   .
  \end{gather*}
We thus complete the proof.  

\end{proof}

\section{Laws of the iterated logarithm}     \label{Sec:LIL}
 In this section, we will prove Theorem \ref{Thm:LILrectra}.

We first explain the idea of the proof. 
 Let $(G, \mu)$ be a weighted graph such that $G$ is of bounded degree. 
 For notational simplicity, let $o \in V(G)$ be a distinguished point and $\bm{0}  $ be the element of  $(\mathbb{Z}_2)^{V(G)}$  
   such that $\bm{0} (v) = 0$ for all $v \in V(G)$.  
In order to realize a given lamp state $(\eta , x) \in V(\mathbb{Z}_2 \wr G)$ from the initial lamp state $(\bm{0} , o) \in V(\mathbb{Z}_2 \wr G)$, 
  we need to visit all the ``on-lamp vertices''.  So
     \begin{align}
           & ~~~  \sum_{i \in V(G) }  \eta(i)  \le   d_{ \mathbb{Z}_2 \wr G } ( (\bm{0}, o)  , (\eta , x) )    \notag   \\ 
           &\le   \text{(the  minimum number of steps to visit all the ``on-lamp vertices'' from $o$ to $x$)}   +  \sum_{i \in V(G) }  \eta(i) .  \label{eq:distest10}
     \end{align}

 We apply the above observation to the lamplighter walk $\{ Y_n = (\eta_n , X_n) \}_{n \ge 0}$ on $\mathbb{Z}_2 \wr G$. 
 Note that the lamp at a certain vertex of $G$ (say $z$) cannot be changed without making the lamplighter visit at $z$. 
From this and (\ref{eq:distest10}), we see that $d_{ \mathbb{Z}_2 \wr G } (Y_0 , Y_n ) $ is heavily related 
 to the range of random walk $\{ X_n \}_{n \ge 0}$ on $G$.
Set $R_n = \sharp \{ X_0 , X_1 , \cdots , X_n \}$.
 Intuitively, $\sum_{i \in V(G) }  \eta_n (i) $ is close to $\frac{1}{2}  R_n$.
Indeed, we will show the following in Proposition \ref{Prop:LIL40} and Proposition \ref{Prop:LIL60}:      
       \begin{gather}
              \frac{1}{4}  R_n \le d_{ \mathbb{Z}_2 \wr G } ( Y_0 ,Y_n )  \quad \text{for all but finitely many $n$,} \quad a.s.,   \label{eq:LIL10} \\   
                    d_{ \mathbb{Z}_2 \wr G } ( Y_0 ,Y_n )    \le (2M+1)    R_n,      \qquad \text{for all $n$},     \label{eq:LIL20}
         \end{gather}
 where $M$ is defined by $(\ref{eq:bounded})$. 
We will prove $(\ref{eq:LIL10})$ and $(\ref{eq:LIL20})$ in subsection  \ref{subsec:LIL41}. 
The behavior of $R_n$ differs for $d_s /2< 1$ and $d_s /2 > 1$.
In subsection \ref{subsec:LILrec} (resp. \ref{subsec:LIL44}), 
 we prove the LILs of $R_n$ and $d_{ \mathbb{Z}_2 \wr G } (Y_0 , Y_n ) $ for $d_s /2 <1$ (resp. for $d_s/2 >1$).

\subsection{ Relations between the distance and the range}   \label{subsec:LIL41}
The main goal of this subsection is to prove $(\ref{eq:LIL10})$ and $(\ref{eq:LIL20})$.

\begin{proposition}   \label{Prop:LIL40} 
 Suppose that Assumption \ref{Ass:rw} (1), (2) and (3) hold. Then the following holds; 
           \begin{gather*}
                    \frac{1}{4} R_n  \le \sum_{i \in \{ X_0, X_1, \cdots, X_n \} } \eta_n (i) 
                           ~~ \text{for all but finitely many $n$}, ~~ \text{$P_{(\bm{0},x)}$-a.s., for all $x \in V(G)$.}
             \end{gather*}
\end{proposition}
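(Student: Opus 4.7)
My plan rests on a single structural observation: conditional on the full trajectory $(X_k)_{k=0}^n$, the lamp values $\{\eta_n(i) : i \in \bar R_n\}$ (where $\bar R_n := \{X_0, \ldots, X_n\}$) form an independent collection of $\mathrm{Bernoulli}(1/2)$ random variables. To see this, note that since the initial configuration is $\bm 0$, each $\eta_n(i)$ equals the XOR of all the independent fair switching bits used at site $i$ during the $n$ switch-walk-switch steps. For every $i \in \bar R_n$, at least one such bit is used, so the XOR is itself $\mathrm{Bernoulli}(1/2)$; and distinct sites use disjoint collections of switching bits, which gives the independence. Consequently, writing $S_n := \sum_{i \in \bar R_n} \eta_n(i)$, the conditional law of $S_n$ given $R_n = k$ is exactly $\mathrm{Binomial}(k, 1/2)$.

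With this in hand, Hoeffding's inequality yields the concentration bound
\[
P\bigl(S_n \le \tfrac{1}{4} R_n \bigm| R_n = k\bigr) \le \exp(-k/8),
\]
and the task reduces to a Borel--Cantelli argument, for which I need a lower bound on $R_n$ that holds with overwhelming probability. Here I will use the elementary observation that a shortest path in $G$ from $X_0$ to the farthest point visited up to time $n$ has all its vertices in $\bar R_n$, giving the deterministic bound $R_n \ge \max_{0 \le j \le n} d(X_0, X_j)$. Combined with Lemma \ref{Lem:LIL50}, this produces
\[
P_x(R_n \le r) \le c_1 \exp\!\left(-c_2 \frac{n}{r^{d_w}}\right), \qquad r, n \ge 1.
\]

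Taking $r_n := C \log n$ with $C \ge 16$, the right-hand side decays super-polynomially in $n$, so $\sum_n P_x(R_n \le r_n) < \infty$; and on the complementary event $\{R_n > r_n\}$ the Hoeffding estimate gives $P(S_n < R_n/4,\, R_n > r_n) \le \exp(-r_n/8) \le n^{-2}$. Summing these two contributions and applying the first Borel--Cantelli lemma yields the claim.

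The only subtle point is the conditional-independence assertion of the first paragraph: one must carefully unwind the switch-walk-switch convolution to verify that every vertex in $\bar R_n$ is indeed toggled by at least one fresh independent fair bit and that different sites use disjoint collections of bits. Once this bookkeeping is established, the rest is standard Gaussian-type concentration combined with the path-confinement estimate of Lemma \ref{Lem:LIL50}, which is the only quantitative input from Assumption \ref{Ass:rw} needed here.
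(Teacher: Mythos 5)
Your proposal is correct and follows essentially the same route as the paper: the conditional $\mathrm{Binomial}(R_n,1/2)$ law of $S_n$, an exponential tail bound for the binomial, the lower bound $R_n \ge \max_{0\le j\le n} d(X_0,X_j)$ fed into Lemma \ref{Lem:LIL50}, and Borel--Cantelli. The only (immaterial) differences are your choice of cutoff $r_n = C\log n$ where the paper uses $n^{1/(2d_w)}$, and Hoeffding in place of the Chernoff bound.
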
 

\begin{proof}
We fix $x \in V(G)$ and write $P$ instead of $P_{(\bm{0},x)}$. 
Define $S_n = \sum_{i \in \{ X_0, X_1, \cdots , X_n \}} \eta_n (i) $. 
It is easy to see that 
     \begin{gather*}
                    P (S_n = l \mid R_n = k)  = \left( \frac{1}{2} \right)^{k}
                        \begin{pmatrix}
                                 k \\
                                 l
                        \end{pmatrix} 
     \end{gather*}
 for $0 \le l \le k$. Then we have
      \begin{align}
            P \left( S_n \le \frac{1}{4} R_n \right) 
                  &= \sum_{l=0}^n P \left( S_n \le \frac{1}{4} l , R_n = l \right)  
                       = \sum_{l=0}^n \sum_{m=0}^{ \left[ \frac{1}{4} l \right]}   \left( \frac{1}{2} \right)^{l} 
                                      \begin{pmatrix}
                                             l \\
                                             m
                                       \end{pmatrix}    
                                  P(R_n = l) \nonumber\\
                     &\le  \sum_{l=0}^n \exp \left( - \frac{1}{16} l \right) P(R_n = l)   
                                \qquad \text{by the Chernoff bound}\nonumber\\
                     &\le P \left( R_n \le  n^{\frac{1}{2d_w}} \right) + 
                            \exp \left( -\frac{1}{16} n^{\frac{1}{2d_w}} \right) P(R_n \ge n^{\frac{1}{2d_w}} )  \nonumber \\
                      & \le P  \left(   \sup_{0 \le l \le  n  }  d(X_0 , X_{l} )  \le   n^{\frac{1}{2d_w}}  \right)   
                                 +   \exp \left( - \frac{1}{16} n^{\frac{1}{2d_w}} \right)   \nonumber\\
                      &\le c_1 \exp ( -c_2 n^{ \frac{1}{2}} )   + 
                                  \exp \left( -\frac{1}{16} n^{\frac{1}{2d_w}} \right) ,    
                                  \qquad \text{ by  Lemma $\ref{Lem:LIL50}$. }\nonumber
      \end{align}
Using the Borel-Cantelli lemma, we complete the proof.  
\end{proof}

 \begin{proposition}   \label{Prop:LIL60}  
  Let $(G, \mu)$ be a weighted graph such that $G$ is of bounded degree, and set $M = \sup_{v \in V(G) } \deg v (<\infty )$.
  Then for all realization of $\{ Y_n \}_{n =0}^{\infty}$ and all $n \ge 0$, 
       \begin{gather*}
                d_{ \mathbb{Z}_2 \wr G} (Y_0 , Y_n )  \le  (2M+1)  R_n.
       \end{gather*}
\end{proposition}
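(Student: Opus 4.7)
The plan is to exhibit, for every realization of $\{Y_j\}_{j=0}^{n}$, an explicit path in $\mathbb{Z}_2 \wr G$ from $Y_0$ to $Y_n$ whose length is at most $(2M+1)R_n$. The key observation is that lamps can be toggled only at the current lamplighter position, so $\Supp(\eta_n - \eta_0) \subseteq \bar{R}_n := \{X_0,\ldots,X_n\}$. Moreover, $\bar{R}_n$ is the vertex set of a connected subgraph $H$ of $G$: the edges $\{X_i, X_{i+1}\}$ (with $X_i \ne X_{i+1}$) appearing along the walk all lie in $H$ and connect all of its vertices.

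I would next pick a spanning tree $T$ of $H$ and invoke the standard Euler-tour lemma: for any tree on $k$ vertices and any two prescribed vertices $u,v$, there is a walk in $T$ from $u$ to $v$ that visits every vertex and traverses each edge of $T$ at most twice, and so has length at most $2(k-1)$. Applying this with $u = X_0$, $v = X_n$, $k = R_n$ produces a walk $X_0 = w_0, w_1, \ldots, w_L = X_n$ in $G$ with $L \le 2(R_n - 1)$ that passes through every vertex of $\bar{R}_n$.

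Finally, I would lift this $G$-walk to a path in $\mathbb{Z}_2 \wr G$ by following each move $w_{j-1}\to w_j$ with the lamp configuration unchanged and, at each first visit to a vertex $v \in \bar{R}_n$ with $\eta_0(v) \ne \eta_n(v)$, inserting one lamp-toggle edge. By construction the endpoint is exactly $(\eta_n, X_n) = Y_n$, and the total number of edges used is at most $2(R_n-1) + |\Supp(\eta_n-\eta_0)| \le 3R_n \le (2M+1)R_n$ (using $M\ge 1$ whenever $G$ has any edges; the case with no edges is trivial since then $R_n=1$ and $Y_n = Y_0$). The only nontrivial ingredient is the Euler-tour claim in the second step; it is elementary tree combinatorics and is essentially the content of Lemma \ref{Lem:LIL70} applied to the connected set $\bar{R}_n$.
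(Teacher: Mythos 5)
Your proof is correct, but it takes a genuinely different route from the paper's. The paper reduces the statement to Lemma \ref{Lem:LIL70}, which builds a covering path of the visited subgraph $G'=(\bar R_n, \{X_iX_{i+1}\})$ by surgery on an arbitrary covering walk: whenever an edge is traversed three or more times, two traversals with the same orientation are located and the intermediate segment is reversed and spliced, so that in the end each edge is used at most twice; counting visits per vertex by $2\deg(v)$ then gives the length bound $2M\,\sharp V(H)$ and hence the constant $2M+1$ after adding the at most $R_n$ lamp toggles via \eqref{eq:distest10}. You instead pass to a spanning tree of the visited subgraph and invoke the Euler-tour fact that a tree on $k$ vertices admits a covering walk between any two prescribed vertices traversing each tree edge at most twice, hence of length at most $2(k-1)$; together with the at most $R_n$ toggles this gives $d_{\mathbb{Z}_2\wr G}(Y_0,Y_n)\le 3R_n-2$, which is sharper than $(2M+1)R_n$ since $M\ge 1$ for an infinite connected graph. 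Your argument is essentially the simplification recorded in the remark following Lemma \ref{Lem:LIL70} (attributed there to Pierre Mathieu). Both proofs are complete; yours buys a degree-independent constant, while the paper's surgery lemma is stated and proved for an arbitrary finite connected subgraph with prescribed endpoints, which is the form reused in the rest of Section \ref{Sec:LIL}.
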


To prove the above proposition, we need the following lemma.
To exclude ambiguity, we first introduce some terminology.  
Let $H$ be a connected subgraph of $G$. 
   \begin{itemize}
          \item   A {\it path} 
                     $\gamma $ in $H$ is a sequence of vertices $v_0 v_1 \cdots v_k$ such that $v_i \in V(H)$ and   
                     $v_i v_{i+1} \in E(H)$  
                     for all $i $. 
                     For a path $\gamma$, we set $V(\gamma ) = \{ v_0 , v_1 , \cdots , v_k \}$,
                     and define the length of $\gamma $ as $ |\gamma | = k$. 
                     $e_j = v_j v_{j+1} ( j=0,1, \cdots , k-1)$ are said to be the {\it edges} of $\gamma$.  
  
          \item For the path $\gamma$ given above and a  
                    given edge $e \in E(G)$, we define $F(\gamma, e) = \{ l \in \{ 0,1,\cdots,k-1 \} \mid e_{l} = e \}$. 
   
          \item We denote $\overrightarrow{e} = \overrightarrow{uv}$ if the edge $e$ is directed from $u $ to $v$ and call  
                 $\overrightarrow{e}$  a directed edge.
                  For two directed edges $\overrightarrow{e_1} = \overrightarrow{u_1 v_1}$ and  $\overrightarrow{e_2} =  \overrightarrow{u_2 v_2}$, 
                    $\overrightarrow{e_1} $ and  $\overrightarrow{e_2 }   $ are equal if and only if
                    $u_1 = u_2, v_1 = v_2$.
    \end{itemize}

\begin{figure}[htb]
      \begin{minipage}{0.5\hsize}
                 \begin{center}
                            \includegraphics[width=50mm]{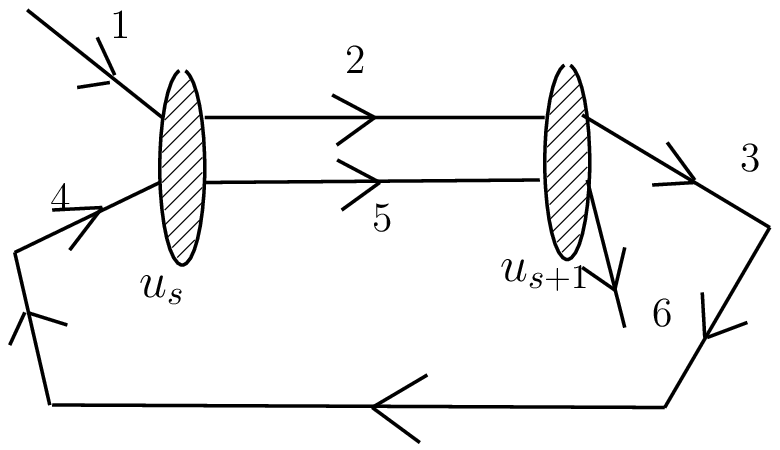}
                 \end{center}
                 \caption{an example of the path $\eta$. }
                 \label{fig:graph10}
     \end{minipage} 
     \begin{minipage}{0.5\hsize}
                \begin{center}
                            \includegraphics[width=50mm]{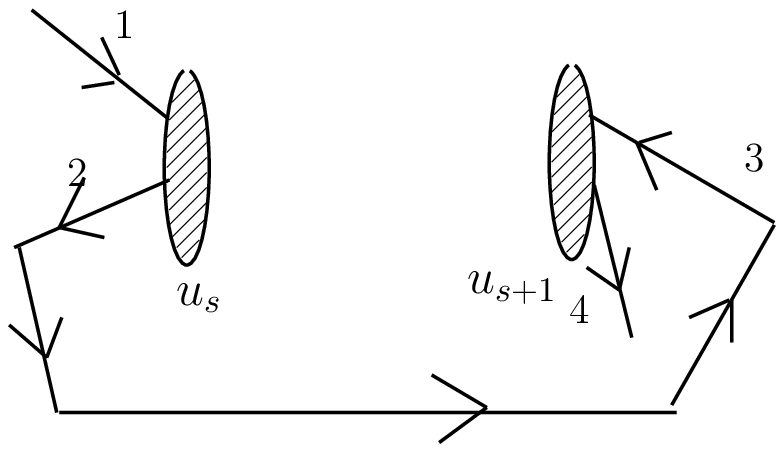}
                \end{center}
               \caption{an example of the surgery of $\eta$. }
               \label{fig:graph20}
     \end{minipage}
\end{figure}

\begin{lemma} \label{Lem:LIL70}
  Let $G$ be a graph of bounded degree, $M = \sup_{v \in V(G) } \deg v (<\infty )$ and $H$ be a finite connected subgraph of $G$.  
 
  \begin{enumerate}
      \item[$(1)$]  Let $x,y \in V(H)$. Then there exists a path $\gamma = w_0 w_1 \cdots w_k$ in $H$ such that the following hold. 
           \begin{enumerate}
                 \item[$(a)$] $\{ w_0, w_1, \cdots, w_k \} = V(H)$. 
                 \item[$(b)$]   Define $\tilde{e}_j = w_j w_{j+1}$ for $j=0,1, \cdots, k-1$. 
                                    Then $\sharp \left\{ j \in \{ 0,1,\cdots, k-1 \} \mid \tilde{e}_j = \tilde{e}_s \right\} \le 2$ for all $s=0,1,\cdots, k-1$.
                 \item[$(c)$] $w_0 = x$ and $w_k = y$. 
           \end{enumerate}

      \item[$(2)$] Let $\gamma$ be as in $(1)$. Then $|\gamma | \le 2 M \sharp V(H)$. 
  \end{enumerate}
\end{lemma}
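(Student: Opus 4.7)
The plan is to reduce the problem to a spanning tree of $H$ and perform a depth-first traversal adapted to the endpoints $x$ and $y$. Since $H$ is a finite connected graph, fix any spanning tree $T \subseteq H$; let $P : x = u_0, u_1, \ldots, u_\ell = y$ be the unique path from $x$ to $y$ in $T$. Deleting the edges of $P$ from $T$ breaks $T$ into subtrees $T_0, T_1, \ldots, T_\ell$ with $u_i \in T_i$, and $\{V(T_i)\}_{i=0}^\ell$ partitions $V(H)$.

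On each $T_i$ I take the standard depth-first tour that starts and ends at $u_i$: this is an Eulerian circuit of the doubled tree $2T_i$, which visits every vertex of $T_i$ and uses each edge of $T_i$ exactly twice, so it has length $2(\sharp V(T_i) - 1)$ (with the convention that this is $0$ when $T_i = \{u_i\}$). Concatenate as follows to define $\gamma$: the DFS tour of $T_0$ from $x$ back to $x$, then the edge $u_0 u_1$, then the DFS tour of $T_1$ from $u_1$ back to $u_1$, then the edge $u_1 u_2$, and so on, finishing with the DFS tour of $T_\ell$ starting and ending at $y$. By construction $V(\gamma) = \bigcup_i V(T_i) = V(H)$ and $w_0 = x$, $w_k = y$, proving (a) and (c).

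Condition (b) then follows from bookkeeping: every edge of $T$ lying in some $T_i$ appears exactly twice in $\gamma$ (inside its DFS tour); every edge of $P$ appears exactly once (as a transition) and in no $T_i$ since it was deleted when forming the $T_i$; and edges of $E(H)\setminus E(T)$ never appear. Hence each edge occurs at most twice. Part (2) is a direct count:
\[
|\gamma| \;=\; \sum_{i=0}^{\ell} 2(\sharp V(T_i) - 1) + \ell \;=\; 2\sharp V(H) - \ell - 2 \;\le\; 2\sharp V(H) \;\le\; 2M\,\sharp V(H),
\]
using $M \ge 1$. The argument is purely combinatorial and fairly routine; the only point that needs explicit justification is that the Eulerian tour on each $T_i$ can be realized with the prescribed start/end vertex $u_i$, which is immediate since the doubled tree $2T_i$ is connected and all its vertex degrees are even. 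So no part of the argument is a genuine obstacle, but care is needed to make sure the transitions across $P$ combine cleanly with the DFS tours so that $\gamma$ is a path in $H$ ending at $y$ rather than returning to $x$.
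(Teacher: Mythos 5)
Your construction is correct, but it is not the route the paper takes: the paper's proof of part (1) starts from an \emph{arbitrary} walk $\eta$ from $x$ to $y$ covering $V(H)$ and repeatedly performs a surgery --- whenever some edge is traversed three or more times, two of the traversals must occur in the same direction, and reversing the segment of the walk between them removes two occurrences of that edge without creating new edge occurrences --- until every edge appears at most twice; part (2) is then proved for \emph{any} path satisfying (b) by noting that a vertex $w_j$ can appear at most $2\deg(w_j)$ times, whence $|\gamma|\le \sum_{v}2\deg(v)\le 2M\,\sharp V(H)$. Your spanning-tree decomposition along the tree geodesic $P$ from $x$ to $y$, with an Euler tour of each doubled subtree $T_i$ glued by the edges of $P$, is essentially the simpler alternative that the paper itself records in the remark following the lemma (attributed to Pierre Mathieu), and your bookkeeping is sound: each $T_i$-edge is used exactly twice, each $P$-edge once, and $|\gamma|=2\,\sharp V(H)-\ell-2$. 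This buys an explicit construction and a sharper, degree-independent length bound $2\,\sharp V(H)$, which is all that Proposition \ref{Prop:LIL60} needs (and would even improve its constant). The one thing your argument does not deliver is statement (2) read literally as a bound valid for \emph{every} path satisfying (a)--(c) --- you only bound the specific path you build --- but since the downstream application only uses the existence of one short covering path, this is a difference of scope rather than a gap; if you want (2) in full generality, add the paper's one-line degree-counting argument.
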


\begin{proof}
 $(1)$ Take a path  $ \eta  = u_0 u_1 \cdots u_n $ in $H$ such that $u_0 = x$, $u_n = y$ and $\{ u_0 , u_1 , \cdots , u_n \} = V(H)$.
  Define $f_j = u_j  u_{j+1}$. 
  If each edge $f_j  $ satisfies  $\sharp \{ l \in \{ 0,1,\cdots , n-1 \} \mid f_l  =  f_j  \}  \le 2$ for $j=0,1, \cdots, n-1$,  
    then $\eta $ satisfies the conditions $(a), (b)$ and $(c)$.
 So we may assume that there exists $f_j$ such that $\sharp \{ l \in \{ 0,1,\cdots , n-1 \} \mid  f_l  =  f_j  \}  \ge 3$.  
  For such an edge $f_j$, there exist at least two elements $s,t \in F(\eta , f_j)$ such that $\overrightarrow{u_s u_{s+1}} = \overrightarrow{u_t u_{t+1}}$.
   Let $s<t$.
   Define $\eta_{st} = u_s u_{s+1} u_{s+2}  \cdots u_{t-1} u_t $  (see Figure \ref{fig:graph10}).
   Replace $\eta = u_0 \cdots u_{s-1} \eta_{st} u_{t+1} \cdots u_n $ by 
      $\tilde{ \eta } = \tilde{u_0} \tilde{u_1} \cdots \tilde{u_n} = u_0 \cdots u_{s-1} \tilde{ \eta }_{st} u_{t+1} \cdots u_n $ 
    where    $\tilde{\eta}_{st} = u_s u_{t-1} u_{t-2}  \cdots u_{s+2} $. 
    $\tilde{ \eta } $ is again a path, $V(\tilde{\eta} ) = V(H)$ and 
    $\sharp F( \tilde{ \eta }, f_j) =\sharp F(\eta , f_j) - 2$ (see Figure \ref{fig:graph20}).
    Repeat this operation to $f_0, f_1, \cdots , f_{n-1}$ inductively until obtaining 
    a path satisfying $(a),(b)$ and $(c)$.

$(2)$  Note that $w_j$ appears in $V(\gamma )$ at most $2 \deg (w_j)$ times for each vertex $w_j \in V(H)$ . 
The conclusion can be verified  easily.   
\end{proof}

\begin{remark}
While the revision of this paper was under review, Pierre Mathieu pointed us out a simpler proof of this lemma 
(therefore a simpler proof of Proposition $\ref{Prop:LIL60}$). Take a spanning tree of $V(H)$ and let
$\gamma'$ be an exploration path of the spanning tree from $x$ to $x$ 
(i.e. a path that crosses each bond of the tree exactly twice). Then one can produce a desired path $\gamma$ by an easy modification of $\gamma'$.  
\end{remark}

\begin{proof}[Proof of  Proposition $\ref{Prop:LIL60}$.] 
The graph $G^{\prime} = (V(G^{\prime}), E(G^{\prime}))$, where $V(G^{\prime}) = \{ X_0 , X_1 , \cdots , X_n \}$ and 
    $E(G^{\prime}) = \{  X_i X_{i+1}  \in E(G) \mid 0 \le i  \le n-1 \}$, is itself a connected subgraph of $G$. 
So  applying Lemma $\ref{Lem:LIL70}$ to $G^{\prime}$, we have 
   \begin{gather*}
        \min \{ | \gamma | \mid \gamma \text{ is a path starting at } X_0, V(\gamma ) = \{ X_0 , X_1 , \cdots , X_n \} \}  \le 2 M  R_n .
   \end{gather*}
By this and $(\ref{eq:distest10})$, we obtain
$d_{\mathbb{Z}_2 \wr G} (Y_0 , Y_n)  \le (2M + 1)  R_n$.   
\end{proof}

\subsection{Proof of Theorem \ref{Thm:LILrectra}(I)}  \label{subsec:LILrec}

In this subsection, we prove the LILs for $\{ Y_n \}_{n \ge 0}$ when $d_s/2 < 1 $.

\begin{theorem}     \label{Thm:LILdisc}
Assume that Assumption \ref{Ass:rw} (1), (2), (4) and $d_s/2 < 1$ hold.
Then there exist (non-random) constants $c_1 , c_2>0$ such that the following hold:
      \begin{align*}
                \limsup_{n \to \infty} \frac{R_n}{ n^{d_s/2} (\log \log n)^{1-d_s/2} } 
                          = c_1,   \qquad \text{$ P_x $-a.s. $\,\, \forall x \in V(G)$},       \\
                 \liminf_{n \to \infty} \frac{R_n}{ n^{d_s/2} (\log \log n)^{-d_s/2} } 
                          = c_2,   \qquad  \text{$P_x $-a.s. $\,\, \forall x \in V(G)$} .   
      \end{align*}
\end{theorem}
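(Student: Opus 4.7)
The plan is to derive both LILs from three ingredients: (i) the correct mean scale $E_x[R_n]\asymp n^{d_s/2}$, valid under strong recurrence $d_s/2<1$ by summing the diagonal heat-kernel estimates $p_k(x,x)/m(x)\asymp k^{-d_s/2}$ (so that the truncated Green function $G_n(x,x)\asymp n^{1-d_s/2}$ and $E_x[R_n]\asymp n/G_n(x,x)$); (ii) sharp one-sided deviation inequalities for $R_n$ on each side of this mean; and (iii) Borel--Cantelli along the geometric subsequence $n_k=2^k$, using the monotonicity of $R_n$ to interpolate in between.

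For the \emph{limsup} direction, the upper half rests on a Bass--Kumagai type upper-deviation inequality
\begin{equation*}
P_x\!\bigl(R_n\ge \lambda n^{d_s/2}\bigr)\le C_1\exp\!\bigl(-c_2\lambda^{1/(1-d_s/2)}\bigr),\qquad \lambda\ge 1,
\end{equation*}
obtained by iterating the exit tail of Lemma~\ref{Lem:lower40} to control the exponential moment $E_x[\exp(\theta R_n)]$; taking $\lambda\asymp(\log\log n)^{1-d_s/2}$ produces summable probabilities along $n_k$. For the matching lower half, I would partition time into disjoint blocks $I_k=[T_{k-1},T_k)$ growing geometrically and use an excursion-plus-exploration argument: a single long excursion reaching radius $\sim |I_k|^{1/d_w}(\log\log T_k)^{(1-d_s/2)/d_f}$ during block $I_k$ contributes $\sim |I_k|^{d_s/2}(\log\log T_k)^{1-d_s/2}$ fresh vertices with probability $\ge c/k$, by the sub-Gaussian exit bounds combined with \eqref{eq:dfset}; Markovian independence of blocks and the second Borel--Cantelli then conclude.

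For the \emph{liminf} direction, the upper half is immediate from Proposition~\ref{Prop:lower80}: with $r_n=(n/\log\log n)^{1/d_w}$ the confinement probability is $\ge c_1(\log n)^{-c_2}$, and confinement to $B(X_0,r_n)$ forces $R_n\le V(X_0,r_n)\asymp n^{d_s/2}(\log\log n)^{-d_s/2}$; the second Borel--Cantelli applied on disjoint time blocks yields the upper bound on $\liminf$. The matching lower half is the delicate piece. Introducing the inverse $\tau_K:=\inf\{m:R_m=K\}$, for which $\{R_n<K\}=\{\tau_K>n\}$, I would prove the exponential concentration
\begin{equation*}
P_x(\tau_K>n)\le C_1\exp\!\bigl(-c_2\, n/E_x[\tau_K]\bigr)
\end{equation*}
by decomposing $\tau_K=\sum_{k=1}^{K}(\tau_k-\tau_{k-1})$ and showing, via Lemma~\ref{Lem:lower40} applied at each stage to the complement of the already-visited set, that each inter-arrival time has a sub-exponential tail of the order of its conditional mean $\asymp k^{(d_w-d_f)/d_f}$ and then summing. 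Since $E_x[\tau_K]\asymp K^{d_w/d_f}$, plugging in $K=\epsilon n^{d_s/2}(\log\log n)^{-d_s/2}$ gives $P_x(\tau_K>n)\le (\log n)^{-c\epsilon^{-d_w/d_f}}$, summable along $n_k$ for small $\epsilon$, and monotonicity of $R_n$ extends the bound to all $n$. The main obstacle lies in the two exponential-moment inequalities (upper deviation of $R_n$ in the limsup step and exponential concentration of $\tau_K$ in the liminf step): both require lifting the \emph{pointwise} heat-kernel bounds to sharp control of occupation-time or hitting-time functionals and do not follow mechanically from Lemma~\ref{Lem:LIL50} or Proposition~\ref{Prop:lower80} alone.
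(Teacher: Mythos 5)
Your route is genuinely different from the paper's. The paper never estimates $R_n$ directly: it first proves a modulus of continuity for the local times in the resistance metric (Lemma \ref{Lem:LocTimeEst20} and Theorem \ref{Thm:ModLocTime}, via a discrete Garsia lemma and the comparison $R_{\mathrm{eff}}(x,y)\asymp d(x,y)^{d_w-d_f}$), deduces LILs for the maximal local time $L_n^*$ (Theorem \ref{Thm:LILLoc}), and then transfers them to $R_n$ through $n=\sum_{x}L_n(x)m(x)\le c\,R_nL_n^*$, following \cite{KKW} and \cite{BassKumagai10}. Two of your four bounds are sound: the liminf upper bound (confinement via Proposition \ref{Prop:lower80} plus the second Borel--Cantelli lemma, though the blocks must grow faster than geometrically for the earlier range to be negligible), and the limsup upper bound, \emph{provided} ``iterating the exit tail'' is carried out at the spatial scale $\delta=n^{1/d_w}\lambda^{-1/(d_w-d_f)}$: count successive $\delta$-displacements, each contributing at most $c\delta^{d_f}$ fresh vertices, with more than $Cn/\delta^{d_w}$ of them occurring only with probability $\exp(-cn/\delta^{d_w})=\exp(-c\lambda^{1/(1-d_s/2)})$. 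The cruder reading, $R_n\le c(\sup_{j\le n}d(X_0,X_j))^{d_f}$ plus the tail of the diameter, yields only the exponent $d_w/(d_f(d_w-1))$, which is strictly smaller than $1/(1-d_s/2)$ whenever $d_f>1$ and is not sharp.

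The two lower bounds have genuine gaps. For the limsup, an excursion reaching radius $r$ does \emph{not} contribute $\asymp r^{d_f}$ fresh vertices --- it contributes at least $r$ of them; the number of distinct sites visited in a block is governed by the time budget, not by the displacement, so conditioning on one far excursion cannot produce $|I_k|^{d_s/2}(\log\log T_k)^{1-d_s/2}$ new sites. (A repair is to force $N=\lambda^{1/(1-d_s/2)}$ consecutive outward crossings at scale $\delta=|I_k|^{1/d_w}\lambda^{-1/(d_w-d_f)}$, each covering $c\delta^{d_f}$ new vertices with probability bounded below; the paper instead reads this off the liminf LIL for $L_n^*$ via $R_n\ge cn/L_n^*$.) For the liminf, the claimed tail of $\tau_k-\tau_{k-1}$ at scale $k^{(d_w-d_f)/d_f}$ is false uniformly in the history: if the walk has just filled the last unvisited site of a ball of radius $\asymp k^{1/d_f}$, the time to find a new site is the exit time of that ball, of order $k^{d_w/d_f}$; with that worst-case scale the product of exponential moments carries a factor $e^{CK}$, which swamps the target $e^{-cn/K^{d_w/d_f}}$ at the relevant scale $n\asymp K^{d_w/d_f}\log\log n$. (Moreover Lemma \ref{Lem:lower40} bounds the probability of travelling far and points the wrong way here; anti-confinement is Lemma \ref{Lem:LIL50}, and even that controls displacement rather than discovery of unvisited sites when the range is not a ball.) The required estimate $P_x(R_n<K)\le C\exp(-cn/K^{d_w/d_f})$ is a Donsker--Varadhan-type lower-tail bound that the paper obtains from the upper tail of $L_n^*$, since $R_n<K$ forces $L_n^*\ge cn/K$ by pigeonhole. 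Finally, your scheme yields only two-sided bounds, while the theorem asserts equality with nonrandom constants; that last step is a zero--one law inherited from \cite{KKW}, although two-sided bounds would suffice for the application in Theorem \ref{Thm:LILrectra}(I).
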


This is a discrete analog of \cite[Propositions 4.9 and 4.10]{BassKumagai10}. 
Note that the proof of these propositions relies on the self-similarity of the process.  
Since our random walk does not satisfy this property, we need non-trivial modifications for the proof.
Quite recently, Kim, Kumagai and Wang \cite[Theorem 4.14]{KKW} 
proved the LIL of the range for jump processes without using self-similarity of the process.  
By easy modifications, we can apply their argument to our random walk.   
The proof of Theorem \ref{Thm:LILdisc} will be given in Appendix \ref{sec:Appendix}.  

\medskip

\begin{proof}[Proof of Theorem  \ref{Thm:LILrectra}(I)] 
Note that Assumption \ref{Ass:rw} (1) implies that $G$ is of bounded degree. 
Thus by $(\ref{eq:distest10})$, Proposition  \ref{Prop:LIL40},  Proposition \ref{Prop:LIL60} and Theorem \ref{Thm:LILdisc}, we obtain \eqref{eq:LILrec10} and \eqref{eq:LILrec20}.   
\end{proof}

\subsection{Proof of Theorem \ref{Thm:LILrectra}(II)}   \label{subsec:LIL44}
In this subsection, we prove the LILs for $\{ Y_n \}_{n \ge 0}$ when $d_s/2 >1$.

First, we explain the notion of ``uniform condition'' defined in \cite{Okamura}.
We define the Dirichlet form $\mathcal{E}$ on  the weighted graph $(G, \mu )$ by  
     \begin{align*}  
                    &\mathcal{E} (f,f)  = \sum_{x,y \in V(G) }   ( f(x) - f(y) )^2 \mu_{xy},   
     \end{align*}
for $f: V(G) \to \mathbb{R}$, and the effective resistance $R_{ \text{eff} } (\cdot ,\cdot ) $ as
      \begin{align}  \label{eq:Res}
                    &R_{ \text{eff}} (A,B)^{-1}  = \inf \{ \mathcal{E} (f,f)  ; f|A = 1, f|B = 0 \}   
     \end{align}
 for $A,B \subset V(G)$ with $A \cap B = \emptyset$. 
Denote  $\rho (x,n) = R_{ \text{eff}} ( \{ x \} , B(x,n)^c )$ 
      for any $x \in V(G), n \in \mathbb{N}$ and  $\rho (x) = \lim_{n \to \infty} \rho (x,n)$.

\begin{definition}[Okamura \cite{Okamura}]
      We say that a weighted graph $(G,\mu )$ satisfies the uniform condition $(U)$ if $\rho (x,n)$ converges uniformly in $x \in V(G)$ to $\rho (x)$ as $n \to \infty$. 
\end{definition}
 For $A \subset G$, define
              \begin{align*}
                         T_A^{+} &= \inf \{ n \ge 1 \mid X_n \in A \} .
             \end{align*} 
  We write $T_x^+$ instead of  $T_{ \{ x \} }^+$.

The following is an improvement of \cite[Corollary 2.3]{Okamura}.
 \begin{proposition}   \label{Prop:LILTransi30} 
 Let $(G, \mu )$ be a weighted graph satisfying $(U)$ and \eqref{eq:conduct1} and assume that $G$ is of bounded degree.
 If $\sup_{x \in V(G)} P_x (M < T_x^{+} < \infty ) = O (M^{-\delta})$ for some $\delta >0$, then 
      \begin{gather*}
       1-F_2 \le \liminf_{n \to \infty}  \frac{R_n}{n}   \le \limsup_{n \to \infty}   \frac{R_n}{n}  \le  1 - F_1 ,   
                \qquad \text{$P_x$-a.s.}
      \end{gather*} 
 for all $x \in V(G)$, where $F_1 = \inf_{x \in V(G)} P_x ( T_x^{+} < \infty )$ and $F_2 = \sup_{x \in V(G)} P_x (T_x^+ < \infty )$.  
\end{proposition}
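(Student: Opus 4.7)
My approach is the last-visit decomposition $R_n = \sum_{k=0}^n \mathbf{1}_{D_k}$ with $D_k := \{X_j \neq X_k \text{ for all } k < j \le n\}$. By the strong Markov property,
\[
P_x(D_k \mid \mathcal{F}_k) = P_{X_k}(T_{X_k}^+ > n-k) = P_{X_k}(T_{X_k}^+ = \infty) + P_{X_k}(n-k < T_{X_k}^+ < \infty),
\]
where the first summand lies in $[1-F_2, 1-F_1]$ uniformly in $X_k$ and the second is $O((n-k)^{-\delta})$ uniformly by the hypothesis. Summing yields the expectation statement $\liminf E_x[R_n]/n \ge 1-F_2$ and $\limsup E_x[R_n]/n \le 1-F_1$; the substance is to upgrade this to an almost-sure statement.

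To sidestep the fact that $D_k$ depends on the entire segment $(k,n]$, fix a large integer $M$ and define $\eta_k^M := \mathbf{1}\{T_{X_k}^+ \circ \theta_k > M\}$ and $\zeta_k^M := \mathbf{1}\{M < T_{X_k}^+ \circ \theta_k < \infty\}$. Then $\eta_k^M$ is $\mathcal{F}_{k+M}$-measurable with $E_x[\eta_k^M \mid \mathcal{F}_k] = P_{X_k}(T_{X_k}^+ > M) \in [1-F_2, \, 1-F_1 + CM^{-\delta}]$, and a case analysis according as $T_{X_k}^+ \circ \theta_k$ lies in $(0,M]$, $(M, n-k]$, $(n-k, \infty)$, or $\{\infty\}$ produces the two-sided sandwich
\[
\sum_{k=0}^n \eta_k^M - \sum_{k=0}^n \zeta_k^M \;\le\; R_n \;\le\; \sum_{k=0}^{n-M} \eta_k^M + M.
\]
To extract the almost-sure asymptotics of $\sum \eta_k^M$, I partition $\{0,1,\ldots,n\}$ into $M+1$ arithmetic progressions of common difference $M+1$; along each such progression the centered quantities $\eta_k^M - P_{X_k}(T_{X_k}^+ > M)$ form a bounded martingale-difference sequence with respect to the skipped filtration $(\mathcal{F}_{k+M})$, so Azuma--Hoeffding combined with Borel--Cantelli along doubling subsequences delivers
\[
\frac{1}{n}\sum_{k=0}^n\bigl(\eta_k^M - P_{X_k}(T_{X_k}^+ > M)\bigr) \xrightarrow[n\to\infty]{P_x\text{-a.s.}} 0.
\]
The uniform bounds on $P_y(T_y^+ > M)$ then give $\limsup_n R_n/n \le 1-F_1 + CM^{-\delta}$ a.s.\ for every $M$, and intersecting these probability-one events over $M \in \mathbb{N}$ produces the upper bound $\limsup R_n/n \le 1-F_1$ $P_x$-a.s.

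The matching lower bound is the main obstacle: one must show $\limsup_n \frac{1}{n}\sum_{k=0}^n \zeta_k^M \to 0$ as $M \to \infty$ $P_x$-almost surely, but $\zeta_k^M$ depends on the infinite future of the walk and is not measurable with respect to any finite-time $\sigma$-algebra. My plan is to introduce a second horizon $L \gg M$ and split $\zeta_k^M = \mathbf{1}\{M < T_{X_k}^+ \circ \theta_k \le L\} + \tilde\zeta_k^L$ with $\tilde\zeta_k^L := \mathbf{1}\{L < T_{X_k}^+ \circ \theta_k < \infty\}$. The first piece is $\mathcal{F}_{k+L}$-measurable, and the block-martingale SLLN above (with $L$ replacing $M$) gives its a.s.\ Ces\`aro limit to be $\le CM^{-\delta}$. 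For the tail $\tilde\zeta_k^L$ the mean bound is $E_x\bigl[\sum_{k\le n}\tilde\zeta_k^L\bigr] \le C(n+1)L^{-\delta}$; controlling its fluctuations almost surely requires a variance estimate in which the covariances $\operatorname{Cov}(\tilde\zeta_k^L, \tilde\zeta_l^L)$ for $k < l$ are bounded by iterated applications of $\sup_y P_y(L < T_y^+ < \infty) = O(L^{-\delta})$, followed by Chebyshev plus Borel--Cantelli along the subsequence $n_j = \lfloor(1+\varepsilon)^j\rfloor$ and the monotonicity of $R_n$ to fill the gaps between $n_j$ and $n_{j+1}$. Sending $L \to \infty$ and then $M \to \infty$ yields $\liminf R_n/n \ge 1-F_2$ $P_x$-a.s., completing the proof of the proposition.
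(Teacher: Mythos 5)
Your upper bound is sound, and your overall architecture (last-visit decomposition, truncation of the return time, block-martingale SLLN via Azuma--Hoeffding along residue classes, Borel--Cantelli along a geometric subsequence, monotonicity of $R_n$ to interpolate) is essentially a self-contained re-derivation of the inequality the paper simply imports from Okamura, namely
\[
P_x \Bigl( \tfrac{R_n}{n} \le 1-F_2 - \epsilon \Bigr)
\le \tfrac{2}{\epsilon} \sup_{y} P_y (M < T_y^+ < \infty ) + (M+1) a^{n/(M+1)} .
\]
The paper's entire new contribution in this proposition is the choice of parameters after that inequality: take $k>2/\delta$, pass to the subsequence $n^k$, and set $M=M(n)=n^{k/2}-1$, so that both terms become summable in $n$ and Borel--Cantelli applies; this is how the hypothesis is weakened from Okamura's $O(M^{-1-\delta})$ to $O(M^{-\delta})$.

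The gap in your proposal is precisely where you deviate from this: the control of the tail term $\tilde\zeta_k^L=\mathbf{1}\{L<T_{X_k}^+\circ\theta_k<\infty\}$ with $L$ \emph{fixed}. The covariance bound you describe gives, for $k<l$, at best $E[\tilde\zeta_k^L\tilde\zeta_l^L]\le CL^{-2\delta}+C(l-k)^{-\delta}$ (split on whether the first return happens before or after time $l$), and since you have no matching lower bound on $E[\tilde\zeta_k^L]E[\tilde\zeta_l^L]$ you cannot subtract anything. Hence $\mathrm{Var}\bigl(\sum_{k\le n}\tilde\zeta_k^L\bigr)=O(n^2L^{-2\delta})+O(n^{2-(\delta\wedge 1)}\log n)$, and Chebyshev yields
\[
P_x\Bigl(\tfrac1n\textstyle\sum_{k\le n}\tilde\zeta_k^L>\epsilon\Bigr)\le C\epsilon^{-2}L^{-2\delta}+o_n(1),
\]
a quantity that is small but does \emph{not} tend to $0$ as $n\to\infty$ for fixed $L$. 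These probabilities are therefore not summable along any subsequence, Borel--Cantelli gives nothing, and you cannot conclude $\limsup_n \frac1n\sum_k\tilde\zeta_k^L\le\epsilon$ almost surely; ``sending $L\to\infty$ then $M\to\infty$'' never gets off the ground. The repair is to let the truncation level grow polynomially with $n$, exactly as the paper does: with $L=L(n)\to\infty$ a first-moment Markov bound $P_x(\sum_{k\le n}\tilde\zeta_k^{L(n)}>\epsilon n)\le C\epsilon^{-1}L(n)^{-\delta}$ already suffices, provided $L(n)^{-\delta}$ is summable along the chosen subsequence (and provided $L(n)=o(n/\log n)$ so that the Azuma estimate for the adapted piece, whose blocks now have length $L(n)+1$, remains summable). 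Once you make that change your argument closes and is equivalent to the paper's.
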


\begin{remark}
In \cite[Corollary 2.3]{Okamura},    a stronger condition $\sup_x P_x (M < T_x^+ < \infty ) = O(M^{-1-\delta } )$ for some $\delta >0$ is imposed to prove
                  $1-F_2 \le \liminf_{n \to \infty}  \frac{R_n}{n} $.
As we prove below, it is enough to assume $\sup_x P_x (M < T_x^+ < \infty ) = O(M^{-\delta } )$. 
\end{remark}

\begin{proof}[Proof of Proposition \ref{Prop:LILTransi30}.]
For the upper bound  $\limsup_{n \to \infty}   \frac{R_n}{n}  \le  1 - F_1$,
 \cite[Proof of Corollary 2.3]{Okamura} goes through without any modifications.

Hence we prove  $ 1-F_2 \le \liminf_{n \to \infty}  \frac{R_n}{n} $ under our assumption.    
  Fix $\epsilon >0$. By \cite[(2.5), (2.6), (2.7)]{Okamura} there exists $a \in (0,1)$ such that  for any $n$ and $M$
  we have 
                 \begin{gather}
                         P_x \left( \frac{R_n}{n} \le 1-F_2 - \epsilon \right) 
                                     \le \frac{2}{\epsilon} \sup_{x \in V(G)} P_x (M < T_x^+ < \infty )  + (M+1) a^{n/(M+1)}.  \label{eq:LILtransi20}  
                 \end{gather}

 Choose $k > 2/\delta$. Replacing $n $ by $n^k$ in $(\ref{eq:LILtransi20})$, 
 we have
            \begin{align*}
                             P_x \left( \frac{R_{n^k}}{n^k} \le 1-F_2 - \epsilon \right) 
                                     &\le \frac{2}{\epsilon}  \sup_{x \in V(G)}  P_x (M < T_x^+ < \infty )  + (M+1) a^{n^{k}/(M+1)}  \\ 
                                                                            &=\frac{2}{\epsilon}    O(M^{-\delta } )     + (M+1)a^{n^{k}/(M+1)}. 
            \end{align*}
 
Take $M=M (n) = n^{k/2} -1$ and we have 
                \begin{gather*}
                             P_x \left( \frac{R_{n^k}}{n^k} \le 1-F_2 - \epsilon \right)  \le \frac{2}{\epsilon}  O \left( \frac{1}{n^{k\delta /2} } \right)  +n^{k/2} a^{n^{k/2} }.  
                \end{gather*}
 Since  $k\delta /2 >1$, we can apply the Borel-Cantelli lemma and we obtain
               \begin{gather*}
                             1 - F_2 \le \liminf_{n \to \infty} \frac{R_{n^k}}{n^k},  \qquad \text{$P_x$-a.s.}             
               \end{gather*}
 For any $m$, choose $n $ as $n^k \le m < (n+1)^{k}$, and we then have  
               \begin{gather*} 
                           \frac{R_m}{m} \ge \frac{n^k}{m} \frac{R_{n^k}}{n^k} = \left( \frac{n}{n+1} \right)^k  \frac{(n+1)^k}{m} \frac{R_{n^k}}{n^k}  
                                       \ge   \left( \frac{n}{n+1} \right)^k  \frac{R_{n^k}}{n^k}.
               \end{gather*}
   Take $\liminf_{m \to \infty}$ and we obtain $1-F_2 \le \liminf_{n \to \infty} \frac{R_n}{n}$. 

\end{proof}

\begin{proof}[Proof of Theorem \ref{Thm:LILrectra}(II)]
 Note that the uniform condition $(U)$ is satisfied in our framework  by \cite[Proposition 4.6]{Okamura}.

Since $d_s/ 2 > 1$, we have
               \begin{align}  \label{eq:tail10}
                            P_x (M < T_x^+ < \infty )  \le    \sum_{n=M+1}^{\infty}  p_n (x,x)  \le  c\sum_{n=M+1}^{\infty}  n^{-d_s/2} =    O(M^{1-d_s/2}).  
               \end{align}
Note that Assumption \ref{Ass:rw} (1), (2) imply \eqref{eq:bounded} and \eqref{eq:conduct1}.   
By \eqref{eq:tail10} and Proposition $\ref{Prop:LILTransi30}$, we have 
               \begin{align}
                          1-F_2 \le \liminf_{n \to \infty}  \frac{R_n}{n}   \le \limsup_{n \to \infty}   \frac{R_n}{n}  \le  1 - F_1 ,   
                                       \qquad \text{$P_x$-a.s.}     \label{eq:LILtransi90}
               \end{align}

Define $G(x,x) = \sum_{n=0}^{\infty} p_n (x,x)$, and $F(x,x) = \sum_{n=1}^{\infty} P_x (T_x^{+} = n) = P_x ( T_x^{+} < \infty )$.
It is well known that 
   \begin{gather}
                      G(x,x) -1 = F(x,x) G(x,x).    \label{eq:functional}
   \end{gather}  
Since $d_s/2 >1$, we have
    \begin{gather*}
               \sup_{ x \in V(G) }  \sum_{n = 0}^{\infty}   p_n (x,x)  \le c \sum_{n=1}^{\infty} \frac{1}{n^{d_s / 2}}  < \infty. 
    \end{gather*}
By this and $(\ref{eq:functional})$ we have
     \begin{gather}
                      F_2 = \sup_x F(x,x) < 1.    \label{eq:LILtransi95}
     \end{gather}
Thus, by \eqref{eq:distest10}, Proposition \ref{Prop:LIL40}, Proposition \ref{Prop:LIL60}, \eqref{eq:LILtransi90} and \eqref{eq:LILtransi95}, 
 we conclude that for all $\bm{x} \in V(\mathbb{Z}_2 \wr G)$,
     \begin{align*}
               0<  \frac{1}{4} (1-F_2)  & \le \liminf_{n \to \infty} \frac{ d_{\mathbb{Z}_2 \wr G} (Y_0, Y_n) }{n}    \\
                                            & \le  \limsup_{n \to \infty} \frac{ d_{\mathbb{Z}_2 \wr G} (Y_0,Y_n )}{n} \le (2M+1) (1-F_1)< \infty , 
                  \qquad \text{ $P_{\bm{x}}$-a.s.}
     \end{align*}

Hence we complete the proof. 
\end{proof}

\begin{appendices}
 
\section{Proof of  Theorem \ref{Thm:LILdisc} }   \label{sec:Appendix}
\renewcommand{\theequation}{A.\arabic{equation}}
 \setcounter{equation}{0}
In this section, we will explain briefly the essential part of the proof of  Theorem \ref{Thm:LILdisc}, which is a discrete analog of \cite[Propositions 4.9 and 4.10]{BassKumagai10}. 
Note that the results in \cite{BassKumagai10} are for the range of Brownian motion on fractals, and 
the proof heavily relies on the self-similarity of Brownian motion.
Quite recently, Kim, Kumagai and Wang \cite[Theorem 4.16]{KKW} have obtained 
the LIL for the range of jump processes on metric measure spaces without using scaling law of the process. 
We employ the results and techniques in \cite{BassKumagai10}, \cite{Croydon} and \cite{KKW}, 
and prove the LIL for the range of the random walk without using scaling law of the process or of the heat kernel. 

The key to prove the LILs for the range of the process is to establish those for the maximum of local times. 
We assume $d_f < d_w$ and define the local times at $x \in V(G)$ up to the time $n$ as 
     \begin{gather*}
                   L_n (x)  = 
                          \begin{cases}
                                 \frac{1}{m (x)}  \sum_{k=0}^{n-1}  1_{ \{ X_k = x\}  }  &  \text{if $n \ge 1$,}     \\
                                 0                                                                   &  \text{if $n=0$, }                             \end{cases}   
     \end{gather*}
 and the maximum of the local times up to the time $n$ as 
      \begin{gather*}
                   L_n^{\ast} = \sup_{x \in V(G) }    L_n (x) .  
      \end{gather*}

Let $\theta = (d_w - d_f)/2$. 
Recall \eqref{eq:Res} for the definition of the effective resistance, and write 
    $R(x,y) := R_{\text{eff}} ( \{ x \} , \{ y \})$. Also, let $R^{(i)} (x,y) := i^{-(d_w - d_f)} R(x,y)$ for all $x,y \in V(G)$ and $i >0$.   
We first cite a result from \cite{Croydon}.

\begin{lemma} \label{Lem:LocTimeEst20}   
(\cite[Lemma 6.3 (a)]{Croydon}) 
 Suppose Assumption \ref{Ass:rw} (1), (2), (4) and $d_f < d_w$.
 Then there exist constants $c_0, c_1 >0$ such that 
         \begin{gather*}
                   \sup_{i \in [1, \infty)}   \sup_{ \substack{x,y,z \in V(G)  \\ d(x,y) \le  i }  }  
                           P_z \left(  \max_{0 \le k \le  i^{d_w} }   i^{ - (d_w - d_f) } | L_k (x) - L_k (y) |  \ge \lambda \sqrt{R^{(i)} (x,y)}  \right)   
                       \le c_1 \exp (-c_0 \lambda )   
         \end{gather*} 
    for all $\lambda \ge 0$.
In particular, there exist constants $c_1, c_2 >0$ such that  
       \begin{gather}   \label{eq:LTest21}
               \sup_{i \in [1, \infty)}   \sup_{ \substack{x,y,z \in V(G)  \\ d(x,y) \le  i }  }  
                        P_z \left(  \max_{0 \le k \le  i^{d_w} }   | L_k (x) - L_k (y) |  \ge \lambda (id(x,y) )^{\theta} \right)   
                \le c_1 \exp (-c_2 \lambda )   
       \end{gather}
 for all $\lambda \ge 0$.
\end{lemma}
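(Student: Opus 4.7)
My plan is to treat the main inequality as a direct citation from \cite{Croydon} and then deduce \eqref{eq:LTest21} as a corollary via a standard resistance upper bound. For the first displayed estimate, I would simply invoke \cite[Lemma 6.3(a)]{Croydon}, since the hypotheses there (bounded degree, $d_f$-set condition, sub-Gaussian heat kernel bounds, and $d_f < d_w$) coincide with our Assumption \ref{Ass:rw} (1), (2), (4) together with $d_f < d_w$. No additional probabilistic input is required beyond this citation.

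For the ``in particular'' statement \eqref{eq:LTest21}, the key auxiliary fact I would use is the effective resistance upper bound $R(x,y) \le C\, d(x,y)^{d_w - d_f}$ for all $x,y \in V(G)$. In our setting this is a classical consequence of the sub-Gaussian heat kernel estimates \eqref{eq:210}--\eqref{eq:211} together with the $d_f$-set condition \eqref{eq:dfset}; it is one direction of the well-known equivalence between sub-Gaussian bounds and effective resistance estimates on graphs (see \cite{Barlow1,Kumagai2}). Taking square roots yields $\sqrt{R(x,y)} \le \sqrt{C}\, d(x,y)^{\theta}$ with $\theta = (d_w - d_f)/2$.

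Next I would rewrite the first inequality. Multiplying the random variable inside the probability by $i^{d_w - d_f}$ and simplifying $i^{d_w - d_f}\sqrt{R^{(i)}(x,y)} = i^{(d_w-d_f)/2}\sqrt{R(x,y)} = i^{\theta}\sqrt{R(x,y)}$, it is equivalent to
\[
P_z\!\left( \max_{0 \le k \le i^{d_w}} |L_k(x) - L_k(y)| \ge \lambda\, i^{\theta}\sqrt{R(x,y)} \right) \le c_1 e^{-c_0 \lambda}.
\]
Combining this with the resistance bound, the inclusion
\[
\bigl\{\textstyle\max_k |L_k(x)-L_k(y)| \ge \lambda \sqrt{C}\, (i\, d(x,y))^{\theta}\bigr\} \subseteq \bigl\{\textstyle\max_k |L_k(x)-L_k(y)| \ge \lambda\, i^{\theta}\sqrt{R(x,y)}\bigr\}
\]
holds, so the same tail bound applies to the left-hand event. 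A change of variable $\lambda \mapsto \lambda/\sqrt{C}$ gives \eqref{eq:LTest21} with $c_2 = c_0/\sqrt{C}$.

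The only delicate point is the resistance upper bound, but this is entirely standard under our hypotheses, so no novel argument is required; the rest is a simple rescaling. In particular, the $i^{\theta}$ factor in \eqref{eq:LTest21} arises precisely from the combination of the definition of $R^{(i)}$ and the exponent $\theta = (d_w - d_f)/2$ in the resistance bound.
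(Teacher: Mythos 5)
Your proposal is correct and follows essentially the same route as the paper: the first bound is a direct citation of \cite[Lemma 6.3(a)]{Croydon}, and \eqref{eq:LTest21} follows from the comparison $R(x,y)\asymp d(x,y)^{d_w-d_f}$ (the paper cites \cite[Theorem 1.3]{BCK} for this, valid precisely under Assumption \ref{Ass:rw} (1), (2), (4) and $d_f<d_w$) together with the same rescaling of $\lambda$. Your identification that only the upper bound on $R(x,y)$ is needed, and your event inclusion, match the paper's argument.
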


\begin{proof}  
Note that by \cite[Theorem 1.3]{BCK} we have the following relation between the resistance metric and the graph distance
      \begin{gather*} 
                     R(x,y) \asymp  d(x,y)^{d_w - d_f}, \qquad~~\forall x,y\in V(G),  
      \end{gather*} 
 which is a consequence of  Assumption \ref{Ass:rw} (1), (2), (4) and $d_f < d_w$.  
The first statement is the result of \cite[Lemma 6.3 (a)]{Croydon} for the case of $\kappa = T = 1$. 
The second statement can be proved by applying the above relation between the resistance metric and the graph distance.     
\end{proof}

The next theorem is an analogue of \cite[Proposition 4.8]{KKW}. 
Since our proof is different from that of \cite[Proposition 4.8]{KKW} which uses a scaling argument, 
we give the proof below. 
\begin{theorem}[Moduli of continuity of local times]   \label{Thm:ModLocTime}
 Suppose Assumption \ref{Ass:rw} (1), (2), (4) and $d_f < d_w$.
 Then there exist constants $c$, $C>0$ such that 
   \begin{align*}
         &P_o \left(  \max_{ \substack{ x,y \in B_d (o, \kappa u^{1/d_w}) \\  d(x,y)\le L } } \max_{0\le t \le u}  |L_t (x) - L_t (y) |  \ge A \right) 
                               \le c \frac{ (u^{1/d_w} \kappa)^{2d_f}   }{L^{2d_f}}  
                                      \exp  \left( - \frac{CA}{ (\kappa u^{1/d_w} L)^{\theta }   } \right) 
                                         \end{align*}
for all $o \in V(G)$, $u \ge 1$, $\kappa \ge 1$, $0<L \le 2 \kappa u^{1/d_w}$ and $A>0$.
\end{theorem}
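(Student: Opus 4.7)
The plan is to bootstrap the pairwise bound of Lemma~\ref{Lem:LocTimeEst20} to a bound uniform in $(x,y)$ via a Kolmogorov-type chaining argument. First I would specialize \eqref{eq:LTest21} with the choice $i := 2\kappa u^{1/d_w}$: then $i^{d_w} = (2\kappa)^{d_w} u \ge u$, and any two points $x,y \in B(o,\kappa u^{1/d_w})$ satisfy $d(x,y) \le i$, so Lemma~\ref{Lem:LocTimeEst20} applies to every such pair. Because $d(x,y)\le L$, it gives for each fixed admissible pair
\[
P_o\Bigl(\max_{0\le k\le u}|L_k(x)-L_k(y)|\ge \Lambda\Bigr) \le c_1 \exp\Bigl(-c_2 \Lambda/(\kappa u^{1/d_w} L)^{\theta}\Bigr),
\]
which already carries the exact exponent appearing in the target inequality.

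Next I would upgrade this pairwise bound to one uniform in $(x,y)$ by a dyadic chaining scheme. For $j\ge 0$ set $L_j = L\cdot 2^{-j}$ and let $N_j\subset B(o,\kappa u^{1/d_w})$ be a maximal $L_j$-separated set; the $d_f$-set condition yields $|N_j| \le c(\kappa u^{1/d_w}/L_j)^{d_f}$. For each $x$ in the big ball pick $x^{(j)}\in N_j$ with $d(x,x^{(j)}) \le L_j$; in the discrete setting $L_j < 1$ eventually forces $x^{(j)} = x$, so the telescoping decomposition
\[
|L_t(x)-L_t(y)| \le |L_t(x^{(0)})-L_t(y^{(0)})| + \sum_{j\ge 0}|L_t(x^{(j)})-L_t(x^{(j+1)})| + \sum_{j\ge 0}|L_t(y^{(j)})-L_t(y^{(j+1)})|
\]
is a finite sum. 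I would then allocate the threshold geometrically by setting $A_{\mathrm{coarse}} := A/2$ and $A_j := c_\alpha A\cdot 2^{-j\alpha}$ with $\alpha\in(0,\theta)$ and $\sum_j A_j \le A/4$, so that the total allocation is at most $A$.

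For the coarse pair $(x^{(0)},y^{(0)})\in N_0\times N_0$ a crude union bound over all $|N_0|^2 \le c(\kappa u^{1/d_w}/L)^{2d_f}$ pairs, together with the fixed-pair estimate above, produces precisely the prefactor $(\kappa u^{1/d_w}/L)^{2d_f}$ and exponential $\exp(-CA/(\kappa u^{1/d_w} L)^{\theta})$ announced in the theorem. For each chaining scale $j$ there are $O(|N_{j+1}|) = O((\kappa u^{1/d_w}/L)^{d_f} 2^{(j+1)d_f})$ admissible pairs $(x^{(j)}, x^{(j+1)})$ (with separation at most $L_j+L_{j+1}$), and Lemma~\ref{Lem:LocTimeEst20} gives each of them probability at most $c_1\exp(-c_2 A_j/(i L_{j+1})^{\theta})$. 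Since $\theta-\alpha>0$, the resulting bound
\[
\sum_{j\ge 0} c\,(\kappa u^{1/d_w}/L)^{d_f}\, 2^{jd_f}\exp\Bigl(-c' A\cdot 2^{j(\theta-\alpha)}/(\kappa u^{1/d_w} L)^{\theta}\Bigr)
\]
has its exponential factor dominating the polynomial $2^{jd_f}$, so the whole sum is bounded by a constant times its $j=0$ term, which in turn is absorbed into the coarse-scale bound using $(\kappa u^{1/d_w}/L)^{d_f}\le 2^{2d_f}(\kappa u^{1/d_w}/L)^{2d_f}$ (valid thanks to $L\le 2\kappa u^{1/d_w}$).

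The principal obstacle is the chaining balance: one must select $\alpha\in(0,\theta)$ so that the geometric growth $|N_j|\asymp 2^{jd_f}$ of the nets is beaten by the geometric improvement of the exponential tail. This is exactly where the hypothesis $d_f<d_w$ (equivalently $\theta>0$) is indispensable, and it also explains why the announced prefactor has the power $2d_f$: it arises entirely from the naive $|N_0|^2$ union bound at the coarsest scale, while the finer chaining scales only contribute to the constant.
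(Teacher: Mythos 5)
Your argument is essentially sound but follows a genuinely different route from the paper. The paper does not chain: it sets $6i=\kappa u^{1/d_w}$, restricts to the subgraph $G^{(i)}$ on $B_d(o,6i)$, and applies a discrete version of Garsia's lemma (from Croydon) with $p(x)=x^{\theta}$ and $\psi(x)=\exp(c_*|x|)-1$ to get a pathwise bound $|L_t(x)-L_t(y)|\le c(iL)^{\theta}\{\log(b+1)+2d_f/\theta\}$, where $b$ is a normalized Garsia functional $\tilde\Gamma$; the probability estimate then follows from Markov's inequality once $E_o[\tilde\Gamma]\le c\,i^{2d_f}$ is established by integrating the tail bound \eqref{eq:LTest21}. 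Your chaining scheme uses exactly the same input (the two-point estimate of Lemma \ref{Lem:LocTimeEst20} with $i\asymp\kappa u^{1/d_w}$) but replaces the Garsia machinery with dyadic nets and a union bound; it is more elementary and self-contained, and it makes transparent that the prefactor $(\kappa u^{1/d_w}/L)^{2d_f}$ comes from the $|N_0|^2$ pairs at the coarsest scale. What the Garsia route buys is a bound valid uniformly in $A>0$ in one stroke, since Markov's inequality applied to $\log(\tilde\Gamma/(c_6L^{2d_f})+1)$ needs no case distinction on the size of $A$.

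That case distinction is the one genuine (though easily repaired) gap in your write-up. Your claim that the sum $\sum_{j\ge0}(\kappa u^{1/d_w}/L)^{d_f}2^{jd_f}\exp\bigl(-c'A\,2^{j(\theta-\alpha)}/(\kappa u^{1/d_w}L)^{\theta}\bigr)$ is ``bounded by a constant times its $j=0$ term'' is false when $A/(\kappa u^{1/d_w}L)^{\theta}$ is small: the exponential factors are then all close to $1$ over many scales while $2^{jd_f}$ grows, so the sum can vastly exceed the first term. The fix is standard: since $L\le2\kappa u^{1/d_w}$ forces the prefactor $(\kappa u^{1/d_w}/L)^{2d_f}\ge2^{-2d_f}$, the asserted inequality is vacuous for $A\le K(\kappa u^{1/d_w}L)^{\theta}$ once $c$ is chosen large enough (the right-hand side exceeds $1$), and for $A\ge K(\kappa u^{1/d_w}L)^{\theta}$ with $K$ large the bound $2^{j(\theta-\alpha)}\ge1+c_{\theta,\alpha}j$ lets you extract the factor $\exp(-c''A/(\kappa u^{1/d_w}L)^{\theta})$ and sum the remaining geometric series against $2^{jd_f}$. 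With that paragraph added, your proof is complete.
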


\begin{proof}
Let $c_1,c_2$ be as in Lemma \ref{Lem:LocTimeEst20}.  
Let $G^{(i)} $ be a graph with  $V(G^{(i)})  = B_{d} (o,6i)$ and $E(G^{(i)} )  =  \{ (x,y) \in E(G) \mid x,y \in V(G^{(i)}) \}$. 
We denote by $m_i (\cdot ) = m (\cdot \cap V(G^{(i)}) )$ the measure on $G^{(i)}$. 
Then the following holds by the proof of \cite[Theorem 6.1]{Croydon}; 
There exists a positive constant $c_3$ (not depending on $i$) such that 
   \begin{gather}\label{volebi}
                  m_i  ( B_d (x,r) )   \ge c_3 r^{d_f}
   \end{gather}
  for all $i \in [\frac{1}{6}, \infty)$, $x \in V(G^{(i)}) $ and  $r \in [1, 12i]$. 
  (In fact,  \cite[Proof of Theorem 6.1]{Croydon} discusses the case where $i \ge 1$ is an integer,  
       and we can obtain \eqref{volebi} for $i \in [\frac{1}{6},\infty )$ by adjusting the constant.)  
Set $6i= \kappa u^{1/d_w}$. 
By \eqref{volebi}, we have 
         \begin{gather*}
                    \min_{x \in V(G^{(i)})}  m_i (B_{d_i} (x, r) ) = \min_{x \in V(G^{(i)})}  m_i (B_{d} (x, ir) )  \ge c_3 i^{d_f} r^{d_f}, 
         \end{gather*}
where $d_i = \frac{1}{i} d_i$.  
We now apply a discrete version of Garsia's Lemma (see \cite[Proposition 3.1, Remark 3.2]{Croydon})
    to the graph $G^{(i)}$ with distance $d_i = \frac{1}{i} d $, 
    $p(x) = x^{\theta}$,  $\psi (x) = \exp (c_{\ast} |x| ) -1$, 
     and the function $\displaystyle f(x) = \frac{1}{i^{ 2\theta } } L_t (x)$ on $V(G^{(i)})$ for $0 \le t \le u$, 
    where $ c_{\ast} = 12^{-\theta} c_2/2 $. 
For $x,y \in V(G^{ (i)}) = B_d (o,6i)$ with $d(x,y) \le L$ and $t \in [0,u]$, we have 
           \begin{align}
                   \frac{1}{i^{2\theta} } |L_t (x) - L_t (y) |  
                          &\le  \frac{4}{ c_{\ast} }  \int_{0}^{2d_i (x,y)}  4^{\theta} s^{\theta -1} 
                                \log   \left(  \frac{ \Gamma \left( \frac{1}{i^{2\theta } } L_t \right) }{ c_3^2 i^{2d_f}    s^{2d_f}/2^{2d_f}} +1  \right) ds    \notag  \\
                          &\le  \frac{4^{\theta +1}}{c_{\ast}}  \int_{0}^{2L/i}  s^{\theta -1}  
                                \log   \left(  \frac{ \tilde{\Gamma} \left( \frac{1}{i^{2\theta } } L_u \right) }{ c_4 i^{2d_f} s^{2d_f} } +1  \right) ds ,
                                         \label{eq:loctime1}
            \end{align}
 where $c_4 = c_3^2/2^{2d_f}$ and 
          \begin{align*}
                 \Gamma \left( \frac{1}{ i^{2\theta} } L_t \right)  
                          & := \sum_{x,y \in V(G^{(i)})} 
                                 \exp \left( c_{\ast} \frac{ |L_t (x) - L_t (y)| }{ (id(x,y) )^{\theta} } \right) m(x) m(y) ,  \\
                 \tilde{\Gamma}  \left( \frac{1}{ i^{2 \theta} } L_u \right)  
                        & := \sum_{x,y \in V(G^{(i)})} 
                                \exp \left( c_{\ast} \frac{ \sup_{0 \le t \le u} |L_t (x) - L_t (y)|}{ (id(x,y) )^{\theta} } \right) m(x) m(y).  
           \end{align*} 
  Define $\displaystyle v =  \frac{ \tilde{\Gamma} \left( \frac{1}{i^{2\theta} } L_u \right) }{ c_4 i^{2d_f} s^{2d_f} }$. 
  Then by (\ref{eq:loctime1}), we have 
        \begin{align*}
                \frac{1}{i^{2\theta} } |L_t (x) - L_t (y) |    
                 &\le   \frac{4^{\theta +1}}{c_{\ast}} \frac{1}{2d_f} \frac{1}{c_4^{\theta /2d_f}} \frac{1}{i^{\theta}} \tilde{\Gamma} \left( \frac{1}{i^{2\theta} } L_u \right)^{\frac{\theta}{2d_f}} 
                                   \int_{b}^{\infty}   \left(   \frac{1}{v}  \right)^{ \theta/(2d_f) + 1} \log (v+1) dv  \\
                 &=  c_5 \frac{1}{i^{\theta}} \tilde{\Gamma} \left( \frac{1}{i^{2\theta} } L_u \right)^{\frac{\theta}{2d_f}}  \int_{b}^{\infty}   \left(   \frac{1}{v}  \right)^{ \theta/(2d_f) + 1} \log (v+1) dv ,
       \end{align*}
 where $\displaystyle c_5 = \frac{4^{\theta +1}}{c_{\ast}} \frac{1}{ 2d_f} \frac{1}{ c_4^{\theta /2d_f}} $, $\displaystyle  b = \frac{ \tilde{\Gamma} \left( \frac{1}{i^{2 \theta} } L_u \right) }{ c_6 L^{2d_f} }$,
 $\displaystyle c_6 =  2^{2d_f} c_4$. 
By easy calculus we have 
        \begin{align*}
                \int_{b}^{\infty}   \left(   \frac{1}{v}  \right)^{ \theta/(2d_f) + 1} \log (v+1) dv
                         &\le   \frac{ \log (b+1) + 2d_f/\theta }{ \frac{ \theta}{ 2d_f }  \cdot b^{\theta/2d_f } }  .
         \end{align*}
Thus we have 
         \begin{align*}
                 &  \frac{1}{i^{2\theta} } |L_t (x) - L_t (y) |   
                       \le c_7  \left( \frac{L}{i} \right)^{\theta} \left\{ \log (b+1) + \frac{2d_f}{\theta} \right\} .
         \end{align*}
 where $c_7 = c_5 (2d_f / \theta) c_6^{ \frac{\theta}{2d_f} }$, so  
          \begin{align*}
                P_o \left(  \max_{\substack{ x,y \in B_d (o, \kappa u^{1/d_w}) \\  d(x,y)\le L }}  \max_{0\le t \le u}  |L_t (x) - L_t (y) |  \ge A \right)    
                 & \le  P_o \left( \log (b+1)  \ge   \frac{A}{ c_7  (iL)^{\theta} } - \frac{ 2d_f}{\theta }  \right)\\
                 &\le c_8 \left( \frac{E_o \left[ \tilde{\Gamma} \left( \frac{1}{i^{ 2 \theta } } L_u \right) \right] }{c_6L^{2d_f} } + 1 \right)  \exp \left( - c_9 \frac{A}{ (iL)^{\theta } } \right).
       \end{align*}  
By \eqref{eq:LTest21}, noting $ c_{\ast} = 12^{-\theta} c_2/2$, 
       $\kappa \ge 1$ and $6i = \kappa u^{1/d_w}$ (in particular $u \le (6i)^{d_w}$), we have 
            \begin{align*}
                         E_o \left[ \tilde{\Gamma} \left( \frac{1}{i^{ 2 \theta } } L_u \right) \right]  
                          = & \sum_{x,y \in V(G^{(i)} )}   
                                   E_o  \left[  \exp\left( c_{\ast}   \frac{\sup_{0 \le t \le u}  |L_t (x) - L_t (y) | }{ (id(x,y) )^{\theta} } \right)  \right] 
                                   m(x) m(y) \\
                           \le &\sum_{x,y \in V(G^{(i)} )}  \sum_n  \exp ( c_{\ast} (n+1) ) 
                                    P_o \left(  \frac{\sup_{0 \le t \le u}  |L_t (x) - L_t (y) | }{ (id(x,y) )^{\theta} }  \ge n \right) m(x) m(y) \\  
                            \le  & \sum_{x,y \in V(G^{(i)} )}  \sum_n  
                                             \left\{ \sup_{ \substack{x^{\prime}, y^{\prime} \in V(G) \\ d(x^{\prime}, y^{\prime} ) \le 12i }} 
                                                  P_o \left(  \sup_{0 \le t \le (12i)^{d_w}} |L_t (x^{\prime}) - L_t (y^{\prime})| 
                                                    \ge \frac{n}{12^{\theta}} (12id(x^{\prime},y^{\prime}))^{\theta} \right)  \right\}  \\
                                         & \qquad  \times    \exp ( c_{\ast} (n+1) )  m(x)m(y)  \\
                            \le & \sum_{x,y \in V( G^{(i)} )}  \sum_n      \left\{ c_1  \exp (-c_{2} \frac{ n}{12^{\theta}} )  \right\}  \exp ( c_{\ast} (n+1) )  
                                     m(x) m(y) \\  
              \le & c_{10}   i^{2d_f} \le  c_{11}  u^{2d_f/d_w}  \kappa^{2d_f}.
    \end{align*}

Therefore we have 
     \begin{align*}
          P_o \left(  \max_{\substack{ x,y \in B_d (o, \kappa u^{1/d_w}) \\  d(x,y)\le L }}  \max_{0\le t \le u}  |L_t (x) - L_t (y) |  \ge A \right)  
                \le   c_{12} \left(  \frac{  u^{2d_f/d_w}  \kappa^{2d_f} }{  L^{2d_f} }  +1\right)   \exp  \left( - c_{13} \frac{A}{ (\kappa u^{1/d_w} L)^{\theta }   } \right) . 
     \end{align*}   
Since we are assuming $L \le 2 \kappa u^{1/d_w}$, we complete the proof.

\end{proof}

Given Theorem \ref{Thm:ModLocTime}, the following theorem can be proved similarly to
  \cite[Proof of Theorems 4.11 and 4.15]{KKW}. 
(See also \cite[Propositions 4.7 and 4.8]{BassKumagai10}.)  

\begin{theorem}[LILs for the local times]  \label{Thm:LILLoc}
 Suppose Assumption \ref{Ass:rw} (1), (2), (4) and $d_f < d_w$. Then there exist positive constants $c_1 , c_2$ such that the following hold.  
 \begin{align*}
       &\limsup_{n \to \infty}  \frac{L_n^{\ast} }{n^{1-d_s/2}  (\log \log n)^{d_s/2}   }  = c_1,   
                  \qquad  \text{$P_x$-a.s. for $\forall x \in V(G)$, }     \\
       & \liminf_{n \to \infty}  \frac{L_n^{\ast} }{n^{1-d_s/2}  (\log \log n)^{d_s/2 -1 }   }  = c_2,   
                  \qquad  \text{$P_x$-a.s. for $\forall x \in V(G)$.}
 \end{align*}
\end{theorem}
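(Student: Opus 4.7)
The plan is to follow the framework of Bass--Kumagai \cite{BassKumagai10} and Kim--Kumagai--Wang \cite{KKW}, adapting it to this graph setting where exact self-similar scaling of the walk is unavailable. Both LILs for $L_n^*$ are proved by deriving matching one-point tail estimates for $L_n(x)$ and then extending them to the supremum via the modulus of continuity of Theorem \ref{Thm:ModLocTime}, followed by Borel--Cantelli arguments along a geometric subsequence $n_k = \rho^k$.

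First I would establish two-sided one-point tail bounds for $L_n(x)$. The heat kernel bound \eqref{eq:210} gives $E_x[L_n(x)] = \sum_{k=0}^{n-1} p_k(x,x)/m(x) \asymp n^{1-d_s/2}$, and a Khas'minskii-type iteration using the strong Markov property at successive returns to $x$ produces exponential moment bounds of the form
\[
P_x\bigl(L_n(x) \ge \lambda n^{1-d_s/2}\bigr) \;\le\; c_1 \exp\bigl(-c_2 \lambda^{1/(1-d_s/2)}\bigr).
\]
A matching lower tail follows from Proposition \ref{Prop:lower80}: if the walk is confined to $B(x,r)$ up to time $n$ then by pigeonhole $L_n^* \ge c\, n/V(x,r) \asymp n/r^{d_f}$; choosing $r \asymp (n/\lambda)^{1/d_w}$ yields
\[
P_x\bigl(L_n^* \ge \lambda n^{1-d_s/2}\bigr) \;\ge\; c_3 \exp\bigl(-c_4 \lambda^{1/(1-d_s/2)}\bigr).
\]

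For the $\limsup$ statement, the upper bound follows by taking $\lambda = K(\log\log n)^{d_s/2}$ in the upper tail and union-bounding over the (at most $O(n^{d_s/2} (\log n)^{\mathrm{const}})$) vertices lying in the ball $B(X_0, C n^{1/d_w}(\log n)^{(d_w-1)/d_w})$ where the walk is confined with high probability by Lemma \ref{Lem:lower40}. Borel--Cantelli along $n_k = 2^k$, with Theorem \ref{Thm:ModLocTime} used to control fluctuation between successive $n_k$, gives $\limsup \le c_1$. The lower bound applies the lower-tail estimate above to disjoint time blocks $[n_k, n_{k+1}]$; the strong Markov property renders the resulting events essentially independent, and the second Borel--Cantelli lemma concludes. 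The $\liminf$ statement is proved by an analogous scheme with the two tails exchanged and a slower subsequence $n_k = \exp(k/\log k)$ to produce the correct exponent of $\log\log$; the delicate half here is the $\liminf$ upper bound, where one must exhibit infinitely many times at which the occupation measure is nearly uniform over roughly $n^{d_s/2}(\log\log n)^{1-d_s/2}$ vertices, achieved by a Paley--Zygmund argument on $R_n$ combined with the oscillation control of Theorem \ref{Thm:ModLocTime}.

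The principal obstacle is the absence of exact scaling: in the continuum fractal setting of \cite{BassKumagai10}, a single estimate at scale one rescales to every scale, while here each tail bound must be reproduced scale by scale from the sub-Gaussian estimates, and the polynomial prefactors in Theorem \ref{Thm:ModLocTime} must be absorbed into the exponential factor used for Borel--Cantelli summability. This pins down the allowed geometric ratio $\rho$ and forces us to use the sharper two-parameter oscillation estimate of Theorem \ref{Thm:ModLocTime} rather than the cruder single-pair bound \eqref{eq:LTest21}; this is precisely the point at which the discrete argument diverges from the self-similar one, and is where our proof will most closely follow \cite{KKW}.
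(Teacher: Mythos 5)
Your overall architecture (one-point tail estimates, Theorem \ref{Thm:ModLocTime} for regularity, Borel--Cantelli along geometric subsequences) coincides with the paper's, which at this point simply invokes the proofs of Theorems 4.11 and 4.15 of \cite{KKW} once Theorem \ref{Thm:ModLocTime} is available. However, the quantitative core of your outline is wrong: the tail exponent you assign to the local time is not the correct one, and your own confinement computation shows it. If the walk stays in $B(x,r)$ up to time $n$, the pigeonhole gives $L_n^{\ast}\ge c\,n/r^{d_f}$; to make this equal to $\lambda n^{1-d_s/2}$ you must take $r\asymp (n^{d_s/2}/\lambda)^{1/d_f}=n^{1/d_w}\lambda^{-1/d_f}$ (not $(n/\lambda)^{1/d_w}$), and then the confinement probability $\exp(-cn/r^{d_w})$ becomes $\exp(-c\lambda^{d_w/d_f})=\exp(-c\lambda^{2/d_s})$. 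The correct two-sided tail is therefore $\exp(-c\lambda^{2/d_s})$, not $\exp(-c\lambda^{1/(1-d_s/2)})$; the two exponents agree only when $d_s=1$. Only $2/d_s$ is consistent with the stated normalizations: plugging $\lambda=K(\log\log n)^{d_s/2}$ into $\exp(-c\lambda^{2/d_s})$ yields $(\log n)^{-cK^{2/d_s}}$, which is what Borel--Cantelli along $n_k=2^k$ requires, whereas your exponent produces $(\log\log n)^{d_s/(2-d_s)}$ inside the exponential and the sums do not close. Relatedly, a Khas'minskii/Kac iteration at a single point only gives $E_x[L_n(x)^k]\le k!\,(E_x[L_n(x)])^k$, hence a tail $\exp(-c\lambda)$, which is strictly weaker than $\exp(-c\lambda^{2/d_s})$ since $d_s<2$; to reach the correct exponent one must split $[0,n]$ into $k\asymp\lambda^{2/d_s}$ blocks and dominate the increment of $L(x)$ over each block by an exponential variable of mean $(n/k)^{1-d_s/2}$.

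A second gap is the spatial union bound. The ball in which the walk is confined contains order $n^{d_s/2}(\log n)^{O(1)}$ vertices, while the one-point tail at the LIL threshold is only polylogarithmically small, namely $(\log n)^{-cK^{2/d_s}}$, so summing over all vertices gives a bound that does not tend to zero. This is precisely why Theorem \ref{Thm:ModLocTime} is the centerpiece of the argument rather than a device for interpolating between the times $n_k$: one covers the confinement ball by a net of spacing $L=n^{1/d_w}(\log n)^{-p}$, so that the net has only $(\log n)^{p d_f}$ points, applies the one-point tail on the net, and uses Theorem \ref{Thm:ModLocTime} with this $L$ (note that $(\kappa u^{1/d_w}L)^{\theta}\ll n^{1-d_s/2}$ exactly when $L\ll n^{1/d_w}$) to control the oscillation of $x\mapsto L_n(x)$ between net points. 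With the exponent corrected to $2/d_s$ and the net-plus-oscillation scheme replacing the vertexwise union bound, your outline would match the route of \cite{KKW} that the paper follows.
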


\medskip

Given Theorem \ref{Thm:LILLoc}, the proof of Theorem \ref{Thm:LILdisc} can be done 
similarly as in \cite[Theorem 4.16]{KKW} by using the relation $n = \sum_{x \in R_n} L_n (x) \le R_n L_n^{\ast}$. 
(See also \cite[Propositions 4.9 and 4.10]{BassKumagai10}.) 
\end{appendices}

\vspace{5mm}
\begin{acknowledgements}  
We would like to thank Martin T. Barlow for stimulating discussions when this project was initiated. 
We also thank the anonymous referee for detailed comments and careful corrections.
Especially, we are grateful to the referee to point us out that Theorem \ref{Thm:LILrectra} (II) and 
Lemma \ref{Lem:LIL50} require only 
the on-diagonal heat kernel upper bound \eqref{eq:21dow} rather than the full upper bound \eqref{eq:210}.
The first author was partially supported by JSPS KAKENHI Grant Number 25247007.
The second author was partially supported by JSPS KAKENHI Grant Number 15J02838. 
\end{acknowledgements}

\end{document}